\theoremstyle{plain}
\newtheorem{theorem}{Theorem}[section]
\newtheorem{lemma}[theorem]{Lemma}
\newtheorem{corollary}[theorem]{Corollary}
\theoremstyle{definition}
\newtheorem*{definition}{Definition}
\newtheorem*{example}{Example}
\newtheorem*{problem}{Problem}
\theoremstyle{remark}
\newtheorem*{remark}{Remark}
\newcommand{\gen}[1]{\langle#1\rangle}
\newcommand{\Z}{\mathbb{Z}}
\newcommand{\Aut}{\mathrm{Aut}}
\newcommand{\Cay}{\mathrm{Cay}}
\newcommand{\G}{\Gamma}
\newcommand{\diam}{\mathrm{diam}}
\newcommand{\D}{\mathcal{D}}
\begin{document}
\title[Relative Cayley graphs]{Relative Cayley graphs of finite groups}

\author{M. Farrokhi D. G.}
\address{Mathematical Science Research Unit, College of Liberal Arts, Muroran Institute of Technology, 27-1, Mizumoto, Muroran 050-8585, Hokkaido, Japan.}
\email{m.farrokhi.d.g@gmail.com}

\author{M. Rajabian}
\address{Ferdowsi University of Mashhad, International Branch, Mashhad, Iran.}
\email{mehdi.rajabian@yahoo.com}

\author{A. Erfanian}
\address{Department of Pure Mathematics, Ferdowsi University of Mashhad, Mashhad, Iran.}
\email{erfanian@math.um.ac.ir}

\date{}
\keywords{Relative Cayley graph, Cayley graph, connectivity, numerical invariants, forbidden structures, ABA-group.} \subjclass[2000]{Primary 05C25, 05C40; Secondary 05C07, 05C69, 05C15.}

\begin{abstract}
The relative Cayley graph of a group $G$ with respect to its proper subgroup $H$, is a graph whose  vertices are elements of $G$ and two vertices $h\in H$ and $g\in G$ are adjacent if $g=hc$ for some $c\in C$, where $C$ is an inversed-closed subset of $G$. We study the relative Cayley graphs and, among other results, we discuss on their connectivity and forbidden structures, and compute some of  their important numerical invariants.
\end{abstract}
\maketitle
%=====================================================
\section{Introduction}
Cayley graphs was introduced by Arthur Cayley \cite{ac} in 1878 to give a geometrical representation of groups by means of a set of generators. This translates groups into geometrical objects which can be studied form the geometrical view. In particular, it provides a rich source of highly symmetric graphs, known as transitive graphs, which plays a central role in many graph theoretical problems as well as group theoretical problems, like expanders, width of groups, representation of interconnection networks, Hamiltonian paths and cycles that naturally arise in computer science and etc.

We intent to introduce and study special subgraphs of the Cayley graphs of a group $G$ with respect to a given proper subgroup $H$ of $G$, called the relative Cayley graphs. The \textit{relative Cayley graph} of $G$ with respect to $H$, denoted by $\G=\Cay(G,H,C)$, is a graph whose vertices are elements of $G$ such that two vertices $x$ and $y$ are adjacent if $x$ or $y$ belongs to $H$ and $x^{-1}y\in C$ for some inversed closed subset $C$ of $G\setminus\{1\}$. Clearly, $\G$ has an induce subgraph $\G'=\Cay(H,H\cap C)$, which is itself a Cayley graph. Relative Cayley graphs with respect to specific subgroups of a group $G$ provides a good source of subgraphs in covering the whole Cayley graph of $G$. Also, as we shall see in section 3, they give a criterion for a group to be an ABA-group.

In this paper, we will investigates some combinatorial and structural properties of relative Cayley graphs. In section 2, we shall obtain preliminary results on valencies and regularity of relative Cayley graphs. In section 3, we pay attention to the connectivity and diameter problems on relative Cayley graphs. We give necessary and sufficient conditions for a relative Cayley to be connected and obtain sharp upper bounds for its diameter. In section 4, the numerical invariants of relative Cayley graphs will be considered. We will determine the explicit value of independence number, dominating number, edge independence number and edge covering number as well as edge chromatic number of relative Cayley graphs. Also, we obtain lower and upper bounds for the clique number and an upper bound for the chromatic number of relative Cayley graphs. Finally, in section 5, we shall study the absence of special subgraphs in the relative Cayley graphs, which enable us to obtain a classification of all trees isomorphic to a relative Cayley graph.

Throughout this paper $G$ stands for a finite group, $H$ stands for a proper subgroup of $G$ and $C$ denotes an inversed closed subset of $G\setminus\{1\}$, that is, $C\subseteq G\setminus\{1\}$ and $C^{-1}\subseteq C$. Also, $C^*$ denotes the set $C\cup\{1\}$.
%=====================================================
\section{Valencies}
As it is mentioned in the introduction, $\G=\Cay(G,H,C)$ is a relative Cayley graph of $G$ with respect to $H$ and $\G'=\Cay(H,H\cap C)$ is a Cayley graph of $H$ that is an induced subgraph of $\G$. We begin with identifying the neighbor of an arbitrary vertex in the relative Cayley graphs.
%--------------------------------------------------
\begin{lemma}\label{degrees}
Let $x\in G$.
\begin{itemize}
\item[(i)]If $x\in H$, then $N_\G(x)=xC$ and $\deg_\G(x)=|C|$. Also, $N_{\G'}(x)=x(H\cap C)$ and $\deg_{\G'}(x)=|H\cap C|$.
\item[(ii)]If $x\in G\setminus H$, then $N_\G(x)=H\cap xC$ and $\deg_G(x)=|H\cap xC|=|x^{-1}H\cap C|$.
\end{itemize}
In particular, $\deg(x)=\deg(y)$ whenever $x,y$ belong to the same left coset of $H$
\end{lemma}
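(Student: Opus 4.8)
The plan is to directly unwind the definition of adjacency in the relative Cayley graph $\G = \Cay(G,H,C)$ and compute the neighborhood of a vertex $x$ by splitting into the two cases according to whether $x \in H$ or $x \in G \setminus H$. Recall that $x$ and $y$ are adjacent precisely when at least one of them lies in $H$ and $x^{-1}y \in C$ (equivalently $y^{-1}x \in C$, since $C$ is inverse-closed). The symmetry of $C$ under inversion is exactly what makes the adjacency relation well-defined as an undirected edge, so I would note that up front.

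For part (i), suppose $x \in H$. Then for \emph{any} $y \in G$, the first clause ``$x$ or $y$ belongs to $H$'' is automatically satisfied because $x \in H$. Hence $y$ is a neighbor of $x$ if and only if $x^{-1}y \in C$, i.e.\ $y \in xC$. This gives $N_\G(x) = xC$ immediately, and since left multiplication by $x$ is a bijection on $G$ and $1 \notin C$, we get $\deg_\G(x) = |xC| = |C|$. The statements about $\G'$ follow by the same argument carried out inside $H$: the relevant connecting set is $H \cap C$, the induced subgraph on $H$ is the ordinary Cayley graph $\Cay(H, H\cap C)$, and so $N_{\G'}(x) = x(H\cap C)$ with $\deg_{\G'}(x) = |H \cap C|$.

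For part (ii), suppose $x \in G \setminus H$. Now the clause ``$x$ or $y$ belongs to $H$'' forces $y \in H$, since $x \notin H$. Combined with the requirement $x^{-1}y \in C$ (equivalently $y \in xC$), we conclude $y$ is a neighbor of $x$ exactly when $y \in H \cap xC$, giving $N_\G(x) = H \cap xC$. The degree is then $|H \cap xC|$; the alternative expression $|x^{-1}H \cap C|$ comes from left-translating the set $H \cap xC$ by $x^{-1}$, which is a bijection, so $|H \cap xC| = |x^{-1}(H \cap xC)| = |x^{-1}H \cap C|$.

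For the final ``in particular'' claim, I would argue that the degree is constant on each left coset of $H$. The case analysis already shows every vertex of $H$ has degree $|C|$, so it remains to treat a coset $gH$ with $g \notin H$. \textbf{The main point to handle carefully} is that the set $C$ is being intersected with a \emph{left} coset while the claim concerns \emph{left} cosets of the vertices — so I would verify that two vertices $x, y$ in the same left coset of $H$ yield equal degrees. If $x, y \in gH$ then $x = gh_1$ and $y = gh_2$ with $h_1, h_2 \in H$, and I would compute $x^{-1}H = h_1^{-1}g^{-1}H$ versus $y^{-1}H = h_2^{-1}g^{-1}H$; the subtlety is that these sets need not be equal, so the naive bijection fails. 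The resolution is to use the right-coset description instead: since $x^{-1}H = (Hx)^{-1}$, vertices in the same left coset $gH$ have $Hx = Hgh_1 \ne Hg$ in general, which again warns against a too-quick identification. I expect the cleanest route is to observe that for $x = gh$ the map $c \mapsto h c h^{-1}$ need not preserve $C$, so instead I would directly compare $|x^{-1}H \cap C|$ across the coset by noting $x^{-1}H = h^{-1}(g^{-1}H)$ and tracking how intersection with the fixed inverse-closed set $C$ transforms; establishing this invariance is the one genuinely nonroutine step, and it is where the hypotheses on $H$ as a subgroup (so that $h^{-1}H = H$-type identities apply after correctly positioning the coset) must be used.
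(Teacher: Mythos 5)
Your unwinding of the definition in parts (i) and (ii) is correct and complete; the paper states this lemma without proof, and nothing beyond your argument is needed there. The genuine gap is in the final ``in particular'' claim: you correctly detect that the naive translation argument fails for left cosets, but your proposal then trails off, promising to establish ``the invariance'' by ``tracking how intersection with the fixed inverse-closed set $C$ transforms'' without actually doing so. No such argument can succeed, because with the convention the paper itself uses elsewhere (a left coset of $H$ is a set $gH$; see the proof of the semi-regularity theorem, where $|gH\cap C|$ counts elements of $C$ in left cosets), the claim is \emph{false}. Concretely, let $G=\gen{a,b:a^3=b^2=1,a^b=a^{-1}}$ be the dihedral group of order $6$, $H=\gen{b}$, and $C=\{a,a^{-1},ab\}$, which is inverse closed since $ab$ is an involution. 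Then $a$ and $ab$ lie in the same left coset $aH=\{a,ab\}$, yet
\[N_\G(a)=H\cap aC=H\cap\{a^2,1,a^2b\}=\{1\},\qquad N_\G(ab)=H\cap (ab)C=H\cap\{b,a^2b,1\}=\{1,b\},\]
so $\deg_\G(a)=1\neq2=\deg_\G(ab)$.

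What is true is that the degree is constant on \emph{right} cosets $Hx$, and this has the one-line proof your proposal was searching for: if $y=hx$ with $h\in H$, then
\[N_\G(y)=H\cap hxC=h\left(h^{-1}H\cap xC\right)=h\left(H\cap xC\right)=hN_\G(x),\]
so $\deg_\G(y)=\deg_\G(x)$ (the same computation covers $x\in H$, since then $N_\G(hx)=hxC=hN_\G(x)$). Equivalently, $(hx)^{-1}H=x^{-1}h^{-1}H=x^{-1}H$, so your formula $\deg_\G(x)=|x^{-1}H\cap C|$ from part (ii) is unchanged when $x$ is replaced by $hx$. The downstream uses are unaffected by this correction: the bound $|\D(\G)|\leq[G:H]$ in the next theorem only needs constancy on the cosets of one fixed side, and the number of right cosets is also $[G:H]$. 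So the correct response here is not a cleverer bijection for left cosets but the observation that the lemma's last sentence should say ``right coset''; as written, your proposal neither proves the stated claim nor recognizes that it is unprovable.
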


Let $\D(\G)$ be the set of all different valencies of vertices of $\G$. The following theorem gives a sharp upper bound for $\D(\G)$.
%--------------------------------------------------
\begin{theorem}
We have
\[|\D(\G)|\leq\min\{[G:H],|H|+2\}\leq\lfloor\sqrt{|G|+1}\rfloor+1.\]
\end{theorem}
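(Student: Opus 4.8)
The plan is to establish the two upper bounds $|\D(\G)|\le[G:H]$ and $|\D(\G)|\le|H|+2$ separately, combine them into $\min\{[G:H],|H|+2\}$, and then finish with an elementary numerical inequality relating this minimum to $|G|$.

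First I would read off both bounds directly from Lemma \ref{degrees}. Since the valency of a vertex is constant on the cosets of $H$ and there are exactly $[G:H]$ such cosets, at most $[G:H]$ distinct values can occur, which gives the first bound. For the second, I would split the vertices according to whether they lie in $H$: every $x\in H$ has $\deg_\G(x)=|C|$, contributing a single value, while every $x\in G\setminus H$ has $\deg_\G(x)=|x^{-1}H\cap C|$, the size of a subset of the coset $x^{-1}H$, which has $|H|$ elements; hence this value lies in $\{0,1,\dots,|H|\}$ and contributes at most $|H|+1$ values. Thus at most $|H|+2$ distinct valencies occur, and taking the minimum yields $|\D(\G)|\le\min\{[G:H],|H|+2\}$. (The two bounds can in fact be fused: among the $[G:H]-1$ cosets outside $H$ at most $\min\{[G:H]-1,|H|+1\}$ distinct values appear, and adjoining the single value $|C|$ recovers the minimum directly.)

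The remaining step is purely arithmetic, namely to show $\min\{[G:H],|H|+2\}\le\lfloor\sqrt{|G|+1}\rfloor+1$. Writing $n=|G|$, $t=|H|$ and $k=[G:H]=n/t$, I would argue by cases on which term realises the minimum. If $k\le t+2$, then $t\ge k-2$ forces $n=kt\ge k(k-2)$, so $(k-1)^2\le n+1$; since $k-1$ is an integer, $k-1\le\lfloor\sqrt{n+1}\rfloor$, i.e. $k\le\lfloor\sqrt{n+1}\rfloor+1$. If instead $t+2\le k$, then $t+2\le n/t$ forces $t(t+2)\le n$, so $(t+1)^2\le n+1$ and $t+1\le\lfloor\sqrt{n+1}\rfloor$, i.e. $t+2\le\lfloor\sqrt{n+1}\rfloor+1$. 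In either case the relevant quantity is at most $\lfloor\sqrt{n+1}\rfloor+1$.

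The only genuinely delicate point is this last inequality: the naive estimate $\min\{a,b\}\le\sqrt{ab}$ applied to $k$ and $t+2$ only yields $\sqrt{n+2k}$, which is too weak, so one must exploit the product relation $kt=n$ through the case analysis above to complete the square correctly and recover the sharp additive constant together with the floor. Everything else is a direct reading of Lemma \ref{degrees}.
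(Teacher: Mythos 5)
Your proof is correct, and its first half is exactly the paper's argument: by Lemma \ref{degrees} valencies are constant on left cosets of $H$ (giving the bound $[G:H]$), vertices of $H$ all have valency $|C|$, and vertices outside $H$ have valency $|x^{-1}H\cap C|\in\{0,1,\dots,|H|\}$ (giving the bound $|H|+2$). The divergence is in the arithmetic inequality, and it matters. The paper also argues by two cases; its second case is literally your second case ($|H|(|H|+2)\le|G|$, hence $|H|+1\le\sqrt{|G|+1}$). But in its first case the paper deduces, from $[G:H]\le|H|+2$, that $|G|<(|H|+1)^2$, then $\sqrt{|G|}\le|H|$, then $[G:H]\le\sqrt{|G|}$ --- and that chain is invalid: $|G|<(|H|+1)^2$ only gives $\sqrt{|G|}<|H|+1$, and the conclusion $[G:H]\le\sqrt{|G|}$ is false in general (e.g.\ $|G|=8$, $|H|=2$ satisfies $[G:H]=4\le|H|+2$, yet $4>\sqrt{8}$; the theorem itself still holds there, with equality, since $\lfloor\sqrt{9}\rfloor+1=4$). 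Your first case avoids this by completing the square in the index rather than in $|H|$: from $|H|\ge[G:H]-2$ and $|G|=[G:H]\,|H|$ you get $([G:H]-1)^2\le|G|+1$, and integrality of $[G:H]-1$ then yields $[G:H]\le\lfloor\sqrt{|G|+1}\rfloor+1$. This is the correct way to run that case, and it is also the only argument in either write-up that honestly produces the floor appearing in the statement; the paper's proof ends with the weaker $\min\{[G:H],|H|+2\}\le\sqrt{|G|+1}+1$ and leaves the integrality step implicit. A further small point in your favor: you split on which term realizes the minimum, which is the right dichotomy for what is a purely numerical inequality, whereas the paper conditions on the value of $|\D(\G)|$, so its two cases are not even exhaustive as stated.
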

\begin{proof}
First we prove the first inequality. By Lemma \ref{degrees}, $|\D(\G)|\leq[G:H]$. Again, by Lemma \ref{degrees}(ii), $\deg_\G(x)=|H\cap xC|\leq|H|$  for all $x\in G\setminus H$, that is, $\deg_\G(x)\in\{0,1,\ldots,|H|\}$. Hence, $|\D(\G)|\leq|H|+2$ from which it follows that 
\[|\D(\G)|\leq\min\{[G:H],|H|+2\}.\]
Now, we prove the second inequality. If $|\D(\G)|=[G:H]$, then
\begin{flalign*}
&&&&&&&&&&&&[G:H]&\leq|H|+2&&&&&&&&&&\\
&&&&&&&&&&\Rightarrow&&|G|&\leq|H|^2+2|H|<(|H|+1)^2&&&&&&&&&&\\
&&&&&&&&&&\Rightarrow&&\sqrt{|G|}&\leq|H|&&&&&&&&&&\\
&&&&&&&&&&\Rightarrow&&[G:H]&\leq\sqrt{|G|}&&&&&&&&&&
\end{flalign*}
and if $|\D(\G)|=|H|+2$, then
\begin{flalign*}
&&&&&&&&&&&&|H|+2&\leq[G:H]&&&&&&&&&&\\
&&&&&&&&&&\Rightarrow&&|H|(|H|+2)&\leq|G|&&&&&&&&&&\\
&&&&&&&&&&\Rightarrow&&|H|^2+2|H|+1&\leq|G|+1&&&&&&&&&&\\
&&&&&&&&&&\Rightarrow&&|H|+1&\leq\sqrt{|G|+1}&&&&&&&&&&\\
&&&&&&&&&&\Rightarrow&&|H|+2&\leq\sqrt{|G|+1}+1,&&&&&&&&&&
\end{flalign*}
from which it follows that $\min\{[G:H],|H|+2\}\leq\sqrt{|G|+1}+1$, as required.
\end{proof}
%--------------------------------------------------
\begin{example}
Let $G=\Z_2^{2n}$, $H$ be a subgroup of $G$ order $2^n$ and $\{1,g_1,\ldots,g_{2^n-1}\}$ be a left transversal to $H$ in $G$. Let $C_i$ be a subset of $g_iH$ of size $i$ for $1\leq i\leq 2^n-1$. Then $\D(\G)=2^n=\sqrt{|G|}$.
\end{example}
%--------------------------------------------------
\begin{example}
Let $G=H\rtimes A$, where $A$ is an elementary abelian $2$-group of automorphisms of a group $H$. Let $A=\{1,\alpha_1,\alpha_2,\ldots,\alpha_{n-1}\}$. Since $\alpha_iH$ is inverse closed containing an involution $\alpha_i$, there exists a subset $C_i$ of $\alpha_iH$ such that $|C_i|=\min\{i,|H|\}$ for all  $1\leq i\leq n-2$ and
\[|C_{n-1}|=\begin{cases}n-1,&n-1\leq|H|,\\0,&n-1>|H|.\end{cases}\]
Let $C=\bigcup C_i$. Then $|\D(\G)|=\min\{[G:H],|H|+2\}$.
\end{example}
%--------------------------------------------------
\begin{remark}
Jamali \cite{arj} and Moghaddam, Farrokhi and Safa \cite{mrrm-mfdg-hs} constructed two family of non-isomorphic $2$-groups $\{H_n\}_{n\geq3}$ and $\{K_n\}_{n\geq3}$, respectively, such that $|H_n|=|K_n|=4^n$ and $\Aut(H_n)\cong\Aut(K_n)\cong\Z_2^{n^2}$ for all $n\geq3$. Hence, by choosing subgroups $A_n$ and $B_n$ of $\Aut(H_n)$ and $\Aut(K_n)$ of order $4^n$, respectively, and defining $G_n=H_n\rtimes A_n$ or $K_n\rtimes B_n$, and $C$ as in the above example, it follows that $|\D(\G)|=\sqrt{|G|}$.
\end{remark}

In the sequel, we shall study the graph $\G$ when $\D(\G)$ is small, that is, $\G$ is a regular or a semi-regular graph. Recall that a graph is \textit{regular} if all its valencies are equal and a graph is \textit{semi-regular} if its vertices have just two possible distinct valencies. First we count the number of edges of $\G$.
%--------------------------------------------------
\begin{lemma}\label{edgesnumber}
$|E(\G)|=|H|(2|C|-|H\cap C|)/2$.
\end{lemma}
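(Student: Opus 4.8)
$|E(\G)| = |H|(2|C| - |H\cap C|)/2$.

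Let me think about how to prove this edge-counting formula.The plan is to count edges by summing vertex degrees and invoking the handshaking lemma, $|E(\G)| = \frac{1}{2}\sum_{x\in V(\G)}\deg_\G(x)$, splitting the sum according to whether a vertex lies in $H$ or in $G\setminus H$. By Lemma \ref{degrees}(i), every vertex $x\in H$ has $\deg_\G(x)=|C|$, so the contribution from inside $H$ is exactly $|H|\cdot|C|$. The work is therefore to evaluate the contribution from the $|G|-|H|$ vertices outside $H$, namely $\sum_{x\in G\setminus H}|H\cap xC|$.

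First I would handle this outer sum by a double-counting / Fubini argument. Writing $|H\cap xC| = \sum_{c\in C}[\,xc\in H\,]$ and summing over $x\in G\setminus H$, I would swap the order of summation to get $\sum_{c\in C}\#\{x\in G\setminus H : xc\in H\}$. For a fixed $c$, the condition $xc\in H$ means $x\in Hc^{-1}$, a left... (right) coset of $H$; the number of such $x$ is $|H|$, but one must discard those $x$ that happen to lie in $H$. Now $x\in H$ and $xc\in H$ force $c\in H$, so the only values of $c$ that cost us a subtraction are those in $H\cap C$, and for each such $c$ the overcount is exactly $|H|$ (every $x\in H$ then satisfies $xc\in H$). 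Hence $\sum_{x\in G\setminus H}|H\cap xC| = |H|\,|C| - |H|\,|H\cap C| = |H|(|C|-|H\cap C|)$.

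Combining the two contributions gives $2|E(\G)| = |H|\,|C| + |H|(|C|-|H\cap C|) = |H|(2|C|-|H\cap C|)$, which is the claimed formula. An alternative, perhaps cleaner route that I would present if the coset bookkeeping feels delicate, is to recognize that the induced subgraph $\G'=\Cay(H,H\cap C)$ accounts for the edges lying entirely within $H$: it is $|H\cap C|$-regular on $|H|$ vertices, so $|E(\G')| = |H|\,|H\cap C|/2$. Each vertex of $H$ has total degree $|C|$ in $\G$, of which $|H\cap C|$ edges stay inside $H$, leaving $|C|-|H\cap C|$ edges crossing to $G\setminus H$; summing over the $|H|$ vertices of $H$ counts each crossing edge once (its other endpoint is outside $H$), giving $|H|(|C|-|H\cap C|)$ crossing edges. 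Adding the internal and crossing edges again yields $|E(\G)| = |H|\,|H\cap C|/2 + |H|(|C|-|H\cap C|) = |H|(2|C|-|H\cap C|)/2$.

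The only subtle point — and the one I would take care to state explicitly — is the verification that every edge incident to a vertex of $H$ has at most one endpoint counted as a crossing edge, i.e. that there are no edges with both endpoints outside $H$; this is immediate from the definition of $\G$, since an edge requires at least one endpoint in $H$, so no double-counting or missed edges occur in the crossing class. With that observation the argument is entirely routine, and I expect no genuine obstacle beyond keeping the in-$H$ versus crossing contributions cleanly separated.
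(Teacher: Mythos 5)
Your proof is correct, and your ``alternative route'' is precisely the paper's own one-line argument: $|E(\G)|=|H|\,|C\setminus H|+|H|\,|H\cap C|/2$, splitting edges into those inside $H$ and those crossing to $G\setminus H$, both counted via Lemma \ref{degrees}. Your primary route (handshaking over all vertices with the Fubini swap over $c\in C$) is just an equivalent repackaging of the same count, so there is no substantive difference.
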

\begin{proof}
By Lemma \ref{degrees}, $|E(\G)|=|H||C\setminus H|+|H||H\cap C|/2$, from which the result follows.
\end{proof}
%--------------------------------------------------
\begin{theorem}\label{regular}
$\G$ is regular if and only if $[G:H]=2$ and $H\cap C=\emptyset$ if and only if $\G$ is a Cayley graph.
\end{theorem}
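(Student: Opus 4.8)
The plan is to establish the two stated biconditionals through the single cycle of implications: regularity $\Rightarrow$ the index/intersection condition $\Rightarrow$ ($\G$ is a Cayley graph) $\Rightarrow$ regularity. Throughout I would assume $C\neq\emptyset$; when $C=\emptyset$ the graph $\G$ is edgeless and hence trivially a regular Cayley graph while $[G:H]$ may exceed $2$, so that degenerate case must be set aside before the equivalence can hold as stated.

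For the implication regular $\Rightarrow$ ($[G:H]=2$ and $H\cap C=\emptyset$), I would combine the two preceding lemmas. By Lemma \ref{degrees}(i) every vertex of $H$ has valency $|C|$, so if $\G$ is regular its common valency is exactly $|C|$ and the degree sum equals $|G|\,|C|$. On the other hand Lemma \ref{edgesnumber} gives the degree sum as $2|E(\G)|=|H|(2|C|-|H\cap C|)$. Equating the two and dividing by $|H|$ yields
\[
([G:H]-2)\,|C|=-|H\cap C|.
\]
The left-hand side is nonnegative (as $H$ is proper, $[G:H]\geq2$) while the right-hand side is nonpositive, so both must vanish. Hence $H\cap C=\emptyset$, and since $|C|\geq1$ we also get $[G:H]=2$.

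For ($[G:H]=2$ and $H\cap C=\emptyset$) $\Rightarrow$ ($\G$ is a Cayley graph) I would show outright that $\G=\Cay(G,C)$, the ordinary Cayley graph on $C$ (which is well defined, as $C$ is inverse-closed and avoids the identity). By definition $\G$ is a spanning subgraph of $\Cay(G,C)$, the only extra requirement in $\G$ being that an edge have an endpoint in $H$. But if $x^{-1}y\in C$ then, since $C\subseteq G\setminus H$, the elements $x$ and $y$ lie in distinct cosets of $H$; as $H$ has only two cosets, exactly one of $x,y$ lies in $H$. Thus every edge of $\Cay(G,C)$ is already an edge of $\G$, and the two graphs coincide.

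The final implication, Cayley graph $\Rightarrow$ regular, is immediate: a Cayley graph is vertex-transitive, hence regular. The only genuinely delicate step is the first one, where one must exclude $[G:H]>2$; I expect the main obstacle there to be seeing that the elementary degree-sum identity forces a nonnegative and a nonpositive quantity to coincide, which pins down the index and the intersection simultaneously. The same conclusion can be reached without edge counting: regularity forces $xC\subseteq H$ for every $x\notin H$, whence $C\subseteq x^{-1}H$, and intersecting these inclusions over the nontrivial cosets leaves room for a nonempty $C$ only when there is a single such coset, i.e. $[G:H]=2$.
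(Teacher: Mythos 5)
Your proposal is correct, and its core step is exactly the paper's: equate the degree sum $|G||C|$ (forced by Lemma \ref{degrees}(i) once regularity pins the common valency at $|C|$) with $2|E(\G)|=|H|(2|C|-|H\cap C|)$ from Lemma \ref{edgesnumber}; the sign argument you spell out, with $([G:H]-2)|C|\geq 0$ and $-|H\cap C|\leq 0$, is what the paper compresses into ``which is possible only if.'' The differences are organizational and in thoroughness. The paper proves the first biconditional in both directions and dismisses the Cayley-graph equivalence as ``straightforward,'' while you close a single cycle of implications, which obliges you to actually exhibit $\G=\Cay(G,C)$ when $[G:H]=2$ and $H\cap C=\emptyset$ (every edge of $\Cay(G,C)$ then automatically has an endpoint in $H$, since its endpoints lie in distinct cosets) and to invoke vertex-transitivity for the implication Cayley $\Rightarrow$ regular; both of these fill in real content the paper leaves implicit. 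Your caveat about $C=\emptyset$ is a genuine catch that the paper overlooks: in that case $\G$ is edgeless, hence regular and trivially a Cayley graph, while $[G:H]$ can be arbitrary, so the equation $2|C|-|H\cap C|=[G:H]|C|$ does \emph{not} force $[G:H]=2$, and the theorem as stated is literally false without assuming $C\neq\emptyset$. Your coset-intersection alternative at the end (regularity forces $C\subseteq x^{-1}H$ for all $x\notin H$, and these cosets have empty intersection unless there is only one of them) is also sound and gives the same conclusion without edge counting.
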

\begin{proof}
If $\G$ is regular, then by Lemmas \ref{degrees} and \ref{edgesnumber}, 
\[|H|(2|C|-|H\cap C|)=2|E(\G)|=|G||C|=|H|[G:H]|C|.\]
Thus $2|C|-|H\cap C|=[G:H]|C|$, which is possible only if $H\cap C=\emptyset$ and $[G:H]=2$. The converse is obvious by Lemmas \ref{degrees}. The other part is straightforward.
\end{proof}
%--------------------------------------------------	
\begin{lemma}
Let $x\in G\setminus H$. Then $\deg_\G(x)=|C|$ if and only if $x\in\bigcap_{c\in C}Hc$.
\end{lemma}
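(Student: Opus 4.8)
The plan is to reduce everything to the valency formula already established in Lemma~\ref{degrees}(ii) and then unwind the resulting set containment; the only genuinely structural input will be that $C$ is inverse-closed.

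First I would invoke Lemma~\ref{degrees}(ii), which for $x\in G\setminus H$ gives $\deg_\G(x)=|x^{-1}H\cap C|$. Since $x^{-1}H\cap C$ is a subset of the finite set $C$, the equality $\deg_\G(x)=|C|$ holds precisely when $x^{-1}H\cap C=C$, that is, when $C\subseteq x^{-1}H$. This is where finiteness of $G$ enters, although only to the modest extent that a subset of $C$ having the same cardinality as $C$ must coincide with $C$.

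Next I would translate the containment $C\subseteq x^{-1}H$ into a membership condition on $x$. For a single element, $c\in x^{-1}H$ is equivalent to $xc\in H$; hence $C\subseteq x^{-1}H$ holds iff $xc\in H$ for every $c\in C$. The last step, and the only one requiring any care, is to recast this in the form stated in the lemma. Because $C$ is inverse-closed, $c$ ranges over $C$ exactly when $c^{-1}$ does, so ``$xc\in H$ for all $c\in C$'' is the same condition as ``$xc^{-1}\in H$ for all $c\in C$''. Finally $xc^{-1}\in H$ is equivalent to $x\in Hc$, so this says precisely that $x\in\bigcap_{c\in C}Hc$, closing the chain of equivalences.

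I expect no real obstacle: the argument is a short bookkeeping computation once Lemma~\ref{degrees}(ii) is in hand. The only subtlety worth flagging is the use of the hypothesis $C^{-1}\subseteq C$; without it one would obtain only $x\in\bigcap_{c\in C}Hc^{-1}$, and the clean symmetric formula in the statement would not follow.
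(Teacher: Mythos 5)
Your proposal is correct and takes essentially the same route as the paper: both apply Lemma~\ref{degrees}(ii) to reduce $\deg_\G(x)=|C|$ to the containment $C\subseteq x^{-1}H$, and then use the inverse-closedness of $C$ to convert this into $x\in\bigcap_{c\in C}Hc$ (the paper compresses this step into the phrase ``or equivalently $C\subseteq Hx$''). Your explicit handling of the inversion, including the remark that without $C^{-1}\subseteq C$ one would only get $x\in\bigcap_{c\in C}Hc^{-1}$, makes precise exactly the point the paper leaves implicit.
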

\begin{proof}
Let $X=\bigcap_{c\in C}Hc$. If $C\subseteq H$, then we are done. Thus, we may assume that $C\not\subseteq H$. Then, either $X=\emptyset$, or $X=Hc$ for some $c\in C\setminus H$ and $C\subseteq Hc$. On the other hand, by Lemma \ref{degrees}, $\deg_\G(x)=|C|$ if and only if $|x^{-1}H\cap C|=|C|$, that is, $C\subseteq x^{-1}H$ or equivalently $C\subseteq Hx$. Hence, the result follows.
\end{proof}
%--------------------------------------------------
\begin{theorem}\label{semi-regular}
$\G$ is semi-regular if and only if one of the following conditions holds:
\begin{itemize}
\item[(i)]$C$ has the same number of elements in all left cosets of $H$ different from $H$, or
\item[(ii)]$C$ falls in a single right coset of $H$ different from $H$,
\end{itemize}
\end{theorem}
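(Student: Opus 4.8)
The plan is to read every vertex valency off Lemma~\ref{degrees} and then determine exactly when only two values occur. Each vertex of $H$ has valency $|C|$, whereas a vertex $x\in G\setminus H$ has valency $|x^{-1}H\cap C|$, namely the number of elements of $C$ lying in the left coset $x^{-1}H$. As $x$ runs over $G\setminus H$ the coset $x^{-1}H$ runs over all nontrivial left cosets of $H$, so the valencies occurring outside $H$ are exactly the numbers $|C\cap gH|$ with $gH\neq H$. I would therefore record that
\[\D(\G)=\{|C|\}\cup\{\,|C\cap gH|:gH\neq H\,\},\]
and $\G$ is semi-regular precisely when this set has two elements.

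The decisive tool is the identity $|C|=|C\cap H|+\sum_{gH\neq H}|C\cap gH|$, which gives $|C\cap gH|\le|C|$ for every nontrivial coset, with equality if and only if $C\subseteq gH$ and $C\cap H=\emptyset$. Writing $T=\{\,|C\cap gH|:gH\neq H\,\}$, I would argue by cases on $|T|$. If $|T|\ge3$ then $\D(\G)$ has at least three elements and $\G$ is not semi-regular. If $|T|=1$, say $T=\{a\}$, every nontrivial coset carries exactly $a$ elements of $C$, which is condition~(i), and $\D(\G)=\{a,|C|\}$ has two elements. If $|T|=2$, then $\D(\G)=T\cup\{|C|\}$ can have only two elements provided $|C|\in T$; by the equality case this forces $C$ to lie in a single nontrivial left coset with $C\cap H=\emptyset$, so that $T=\{0,|C|\}$.

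It remains to identify the last situation with condition~(ii) and to run the converse. Here I would use that $C$ is inverse-closed: the inclusion $C\subseteq gH$ is equivalent to $C=C^{-1}\subseteq Hg^{-1}$, so \emph{$C$ lies in a single nontrivial left coset} and \emph{$C$ lies in a single nontrivial right coset} are the same statement, yielding (ii). For the converse one checks directly that each condition produces only the already-computed valencies: under (i) the outside vertices all have valency $a$, and under (ii) they have valency $0$ or $|C|$, in either case at most two values once $|C|$ is included.

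I expect the main obstacle to be the boundary between regularity and semi-regularity, since the definition asks for \emph{exactly} two valencies. Condition~(i) degenerates to a regular graph precisely when $a=|C|$, and the identity above shows this happens only if $[G:H]=2$ with $C\cap H=\emptyset$ (or the trivial $C=\emptyset$); condition~(ii) collapses in the same index-$2$ case. These are exactly the Cayley-graph situations isolated in Theorem~\ref{regular}, so the careful point is to exclude them. The other delicate step is the inverse-closed translation between left and right cosets, which is what makes the right-coset formulation in (ii) natural even though the valencies are governed by left cosets.
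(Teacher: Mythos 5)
Your proposal is correct, and it arrives at the paper's dichotomy by a genuinely different (and more self-contained) route. The paper first proves a standalone lemma --- for $x\in G\setminus H$, $\deg_\G(x)=|C|$ if and only if $x\in X=\bigcap_{c\in C}Hc$ --- and then splits the proof of the theorem on whether $X=\emptyset$ (all outside valencies equal, giving (i)) or $X\neq\emptyset$ (which forces $C\subseteq Hc$ for some $c\in C\setminus H$, giving (ii)). You replace that lemma by the partition identity $|C|=|C\cap H|+\sum_{gH\neq H}|C\cap gH|$ and argue on the size of the set $T$ of outside valencies; its equality case ($|C\cap gH|=|C|$ iff $C\subseteq gH$) plays exactly the role of the paper's lemma, and the inverse-closure translation $C\subseteq gH\Leftrightarrow C\subseteq Hg^{-1}$, which you make explicit, is hidden inside the paper's lemma, where $C\subseteq x^{-1}H$ is converted to $C\subseteq Hx$. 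What your approach buys is precision at the boundary: you correctly note that (i) and (ii) as literally stated also admit regular graphs --- $C=\emptyset$, or $[G:H]=2$ with $H\cap C=\emptyset$, i.e.\ exactly the Cayley-graph case of Theorem \ref{regular} --- so the ``if'' direction of the theorem holds only after these degenerate cases are excluded. The paper disposes of that direction with ``the converse is obvious by Lemma \ref{degrees}'' and thereby misses the point you caught; your counting identity shows cleanly that the degeneration $a=|C|$ occurs only in those situations, so your version of the statement is the more accurate one.
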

\begin{proof}
First suppose that $\G$ is semi-regular. Let $X=\bigcap_{c\in C}Hc$. Then, by previous lemma, $\deg_\G(x)=|C|$ if $x\in H\cup X$ and $\deg_\G(x)=\delta$ for all $x\in G\setminus H\cup X$, for some $\delta<|C|$. If $X=\emptyset$, then $|gH\cap C|=\delta$ for all $g\in G\setminus H$ and we get part (i). If $X\neq\emptyset$, then $C\subseteq Hc$ for some $c\in C\setminus H$ and we get part (ii). The converse is obvious by Lemma \ref{degrees}.
\end{proof}
%=====================================================
\section{Connectivity}
In this section, we give necessary and sufficient conditions for connectivity of relative Cayley graphs and apply it to obtain sharp upper bounds for their diameters. In what follows, $N_\G^i(v)$ denotes the \textit{$i$th neighborhood} of a vertex $v$ of $\G$, for all $i\geq1$, that is, the set of all vertices $u$ of $\G$ for which there is a walk of length $i$ from $u$ to $v$. The following two results are obvious.
%--------------------------------------------------
\begin{lemma}\label{isolatedvertex}
If $x\in G\setminus HC^*$, then $x$ is an isolated vertex.
\end{lemma}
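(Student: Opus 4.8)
The plan is to verify directly that such an $x$ has empty neighbourhood, using the description of neighbours provided by Lemma~\ref{degrees}. The first step is to unwind the hypothesis: since $C^*=C\cup\{1\}$, we have $HC^*=HC\cup H$, so the assumption $x\in G\setminus HC^*$ says precisely that $x\notin H$ and $x\notin HC$ hold simultaneously. It is this decomposition of $HC^*$ that will let us extract the two pieces of information we need.

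Because $x\notin H$, part (ii) of Lemma~\ref{degrees} applies and yields $N_\G(x)=H\cap xC$. Hence it suffices to show that $H\cap xC=\emptyset$. I would argue this by contradiction: suppose some element lies in the intersection, say $h=xc$ with $h\in H$ and $c\in C$. Solving for $x$ gives $x=hc^{-1}$, and here the inverse-closedness of $C$ enters crucially. Since $c\in C$ and $C^{-1}\subseteq C$, we have $c^{-1}\in C$, whence $x\in Hc^{-1}\subseteq HC$, contradicting $x\notin HC$. Therefore $H\cap xC=\emptyset$, so $\deg_\G(x)=|H\cap xC|=0$ and $x$ is isolated.

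The argument is short, and I do not anticipate any genuine obstacle. The only point that requires a moment's care is the use of the hypothesis $C^{-1}\subseteq C$ when converting the neighbour condition $h=xc$ into the coset membership $x\in HC$; this is exactly the step that translates ``$x$ has a neighbour in $H$'' into ``$x$ lies in $HC$'', and it is what makes the set-theoretic hypothesis $x\notin HC^*$ do the work. Everything else is immediate from Lemma~\ref{degrees}.
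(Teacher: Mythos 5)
Your proof is correct: the decomposition $HC^*=H\cup HC$, the appeal to Lemma~\ref{degrees}(ii), and the use of inverse-closedness to turn $h=xc$ into $x=hc^{-1}\in HC$ are all sound. The paper offers no written proof (it declares the lemma obvious), and your argument is precisely the routine verification the authors had in mind, so there is nothing to reconcile.
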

%--------------------------------------------------
\begin{corollary}\label{connectioncondition}
If $\G$ is a connected graph, then $G=HC^*$.
\end{corollary}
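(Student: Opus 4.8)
The plan is to obtain this corollary as an immediate consequence of Lemma \ref{isolatedvertex}, reading that lemma in its contrapositive form. The governing principle is elementary: a connected graph on at least two vertices contains no isolated vertex, because an isolated vertex would constitute a connected component all by itself and hence would be the \emph{only} vertex of a connected graph.

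First I would verify that $\G$ genuinely has at least two vertices, so that connectivity really does forbid isolated vertices. This is where the standing hypothesis enters: since $H$ is a \emph{proper} subgroup of $G$, we have $|G| > |H| \geq 1$, and so $|G| \geq 2$. Thus $\G$ is a connected graph on at least two vertices, and consequently every vertex of $\G$ has degree at least one; in particular $\G$ has no isolated vertices.

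Next I would invoke Lemma \ref{isolatedvertex}, which asserts that every element of $G \setminus HC^*$ is an isolated vertex of $\G$. Combining this with the previous paragraph, the set $G \setminus HC^*$ can contain no element, i.e. $G \setminus HC^* = \emptyset$, which is exactly the assertion $G = HC^*$.

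I do not expect any genuine obstacle here, since the content is already packaged in Lemma \ref{isolatedvertex}. The only point deserving a moment's attention is the degenerate possibility that $\G$ reduces to a single vertex, in which case connectivity holds vacuously while that lone vertex is still technically isolated; the properness of $H$ rules this out by forcing $|G| \geq 2$, and so the contrapositive argument goes through without qualification.
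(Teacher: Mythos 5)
Your proof is correct and follows exactly the route the paper intends: the paper declares both Lemma \ref{isolatedvertex} and this corollary ``obvious,'' with the corollary clearly meant to follow from the lemma by the no-isolated-vertex argument you give. Your extra care about the single-vertex degenerate case (handled by the properness of $H$) is a sound refinement of the same idea, not a different approach.
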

%--------------------------------------------------
\begin{theorem}\label{connectivity}
The graph $\G$ is connected if and only if 
\[H=(H\cap gC)\gen{H\cap C}\gen{H\cap(C\setminus H)^2}\]
for some $g\in G\setminus H$.
\end{theorem}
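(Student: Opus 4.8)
The plan is to exploit the bipartite-like structure of $\G$. Since $C\subseteq G\setminus\{1\}$, the adjacency condition forces every edge of $\G$ to have at least one endpoint in $H$; in particular no two vertices of $G\setminus H$ are adjacent, so $G\setminus H$ is an independent set and every walk enters and leaves $G\setminus H$ in a tightly controlled fashion. First I would dispose of isolated vertices: by Lemma \ref{isolatedvertex} and Corollary \ref{connectioncondition}, if $\G$ is connected then $G=HC^*$, so that $H\cap gC=N_\G(g)\neq\emptyset$ for every $g\in G\setminus H$. This reduces the whole problem to describing a single connected component.

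Next I would compute the component of a fixed vertex $g\in G\setminus H$ through its neighborhoods $N_\G^i(g)$. By Lemma \ref{degrees}(ii) we have $N_\G(g)=H\cap gC\subseteq H$, so after the first step we sit inside $H$ on the set $H\cap gC$. Travel between vertices of $H$ can happen in exactly two ways, and identifying them is the heart of the argument: (a) a direct edge $h\sim hc$ with $c\in H\cap C$, which is precisely an edge of the induced Cayley graph $\G'=\Cay(H,H\cap C)$ and amounts to right multiplication by $H\cap C$; and (b) a length-two excursion $h\to hc_1\to hc_1c_2$ that leaves $H$ (so $c_1\in C\setminus H$) and returns (so $c_2\in C\setminus H$ and $hc_1c_2\in H$), which amounts to right multiplication by an element of $H\cap(C\setminus H)^2$. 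Because $G\setminus H$ is independent, no excursion into $G\setminus H$ can be longer than two steps, so (a) and (b) are the only moves available inside $H$. Using that $C$ is inverse-closed, both $H\cap C$ and $H\cap(C\setminus H)^2$ are symmetric sets, so the set of $H$-vertices reachable from $g$ is exactly the neighbor set spread out by these moves, namely the product $(H\cap gC)\gen{H\cap C}\gen{H\cap(C\setminus H)^2}$.

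It then follows that $\G$ is connected precisely when this set of reachable $H$-vertices is all of $H$. If the displayed product equals $H$, then, since $G=HC^*$ guarantees that each vertex of $G\setminus H$ has a neighbor in $H$, every vertex lies in the component of $g$; conversely, connectivity forces the component of $g$ to contain all of $H$. Unwinding the two move types into an explicit walk gives, for each $h\in H$, a path from $g$ to $h$ and then out to each outside vertex adjacent to $h$, which supplies the converse direction.

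I expect the main obstacle to be the second step: proving that moves (a) and (b) generate \emph{exactly} the stated product $(H\cap gC)\gen{H\cap C}\gen{H\cap(C\setminus H)^2}$. The delicate points are showing that the returning excursions contribute precisely $H\cap(C\setminus H)^2$ (neither a larger nor a smaller set), that the independence of $G\setminus H$ genuinely forbids longer detours, and above all that spreading the initial coset-chunk $H\cap gC$ by the two symmetric generating sets \emph{closes up} to the asserted product rather than to a strictly smaller set, so that one alternating round of $\gen{H\cap C}$ and $\gen{H\cap(C\setminus H)^2}$ already exhausts the component. Reconciling the single-witness formulation (``for some $g$'') with the requirement that \emph{every} vertex of $G\setminus H$ be non-isolated, through the reduction $G=HC^*$, is the remaining bookkeeping.
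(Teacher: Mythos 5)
Your plan retraces the paper's own argument: fix $g\in G\setminus H$, set $N=H\cap gC$, and grow the component of $g$ inside $H$ by the two moves (a) right multiplication by $H\cap C$ and (b) length-two excursions, i.e.\ right multiplication by $H\cap(C\setminus H)^2$. That analysis of the available moves is correct, but the point you flag as the ``main obstacle'' is a genuine and in fact unfixable gap: the set of $H$-vertices reachable from $g$ is the closure of $N$ under \emph{arbitrarily interleaved} moves of both types, namely $N\gen{(H\cap C)\cup(H\cap(C\setminus H)^2)}$, and nothing forces this closure to be attained after a single alternating round; the product $N\gen{H\cap C}\gen{H\cap(C\setminus H)^2}$ can be strictly smaller. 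Indeed, the ``only if'' direction of the stated theorem is false. Let $G=S_4\times\gen{z}$ with $z$ a central involution, $H=S_4$, and
\[C=\{(12)\}\cup\{z,\,(1234)z,\,(1432)z\}.\]
Then $C$ is inverse-closed, $H\cap C=\{(12)\}$, and $(C\setminus H)^2=\{1,(1234),(1432)\}^2=\gen{(1234)}$, so $\gen{H\cap C}=\gen{(12)}$ and $\gen{H\cap(C\setminus H)^2}=\gen{(1234)}$. Since $\gen{(12),(1234)}=S_4$ and $G=HC^*$, interleaved moves (a) and (b) join any two vertices of $H$, so $\G$ is connected. But every $g\in G\setminus H$ has the form $g=\sigma z$, whence $H\cap gC=\sigma\{1,(1234),(1432)\}\subseteq\sigma\gen{(1234)}$ and therefore
\[(H\cap gC)\gen{H\cap C}\gen{H\cap(C\setminus H)^2}\subseteq\sigma\gen{(1234)}\gen{(12)}\gen{(1234)},\]
a set of only $4+16=20<24$ elements, because $\gen{(1234)}\cap(12)\gen{(1234)}(12)=1$ makes the double coset $\gen{(1234)}(12)\gen{(1234)}$ have order $16$. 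So $\G$ is connected while the displayed identity fails for \emph{every} $g\in G\setminus H$.

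The second point you defer as ``remaining bookkeeping'' is equally fatal, this time to the ``if'' direction: your third paragraph invokes $G=HC^*$, but you derived that equality \emph{from} connectedness, which is now the conclusion rather than the hypothesis, and the displayed identity does not imply it. Take $G=\gen{a}\times\gen{b}\times\gen{c}\cong\Z_2^3$, $H=\gen{a}$, $C=\{b,ab\}$. For $g=b$ one has $H\cap gC=\{1,a\}=H$, $\gen{H\cap C}=1$ and $\gen{H\cap(C\setminus H)^2}=\gen{a}=H$, so the displayed product equals $H$; nevertheless $c$ is an isolated vertex (as $c\notin H$ and $cC\cap H=\emptyset$), so $\G$ is disconnected. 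Thus neither implication of Theorem \ref{connectivity} holds as stated, and this is not a defect of your write-up alone: the paper's proof conceals exactly these two errors, first behind ``a simple verification shows'' (its list $M,\ M(C\setminus H),\ f(M),\ldots$ never permits $\G'$-edges to be used again after an excursion, i.e.\ it only performs one alternating round), and then behind the final ``which is equivalent to say'' (which silently discards the requirement $G=HC^*$). What your moves (a) and (b) do establish, once the closure is computed correctly and the isolated-vertex condition is kept, is the corrected statement: $\G$ is connected if and only if $G=HC^*$ and $H=\gen{(H\cap C)\cup(H\cap(C\setminus H)^2)}$.
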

\begin{proof}
Let $g\in G\setminus H$ and $N=H\cap gC$. Clearly, $N=N_\G(g)$ and $N(H\cap C)^i\subseteq N_\G^{i+1}(g)$ for all $i\geq0$. Thus 
\[M=\bigcup_{i=0}^\infty N(H\cap C)^i=N\left(\bigcup_{i=0}^\infty(H\cap C)^i\right)=N\gen{H\cap C}\]
belongs to the connected component containing $g$. Let $f$ be a function defined on the subsets of $H$ by $f(X)=H\cap X(C\setminus H)C$ for all $X\subseteq H$. Then, a simple verification shows that $\G$ is connected if and only if $G$ is the union of the following subsets
\[M,\ M(C\setminus H),\ f(M),\ f(M)(C\setminus H),\ f(f(M)),\ldots\]
Now, if $X$ is a subset of $H$, then 
\begin{align*}
f(X)&=H\cap X(C\setminus H)C\\
&=X(H\cap (C\setminus H)C)\\
&=X(H\cap (C\setminus H)^2),
\end{align*}
from which it follows that $f^i(X)=X(H\cap(C\setminus H)^2)^i$, where $f^0(X)=X$ and $f^{i+1}(X)=f(f^i(X))$ for all $i\geq0$. Thus 
\begin{align*}
\bigcup_{i=0}^\infty f^i(M)&=\bigcup_{i=0}^\infty M(H\cap(C\setminus H)^2)^i\\
&=M\left(\bigcup_{i=0}^\infty (H\cap(C\setminus H)^2)^i\right)\\
&=M\gen{H\cap(C\setminus H)^2}.
\end{align*}
In particular, $\G$ is connected if and only if 
\[G=M\gen{H\cap(C\setminus H)^2}\cup M\gen{H\cap(C\setminus H)^2}(C\setminus H),\]
which is equivalent to say that $H=M\gen{H\cap(C\setminus H)^2}$ and consequently
\[H=(H\cap gC)\gen{H\cap C}\gen{H\cap(C\setminus H)^2}.\]
The proof is complete.
\end{proof}
%--------------------------------------------------
\begin{corollary}
If $H\cap C=\emptyset$, then $\G$ is connected if and only if $G=HC^*$ and 
\[H=\gen{H\cap C^2}.\]
\end{corollary}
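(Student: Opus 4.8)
The plan is to specialize Theorem \ref{connectivity} to the hypothesis $H\cap C=\emptyset$ and then upgrade the resulting product condition to the cleaner statement $H=\gen{H\cap C^2}$. First I would record the simplifications forced by $H\cap C=\emptyset$: here $C\setminus H=C$, so $(C\setminus H)^2=C^2$, while $\gen{H\cap C}=\gen{\emptyset}=1$. Substituting these into Theorem \ref{connectivity} shows that $\G$ is connected if and only if $H=(H\cap gC)\gen{H\cap C^2}$ for some $g\in G\setminus H$. Writing $K=\gen{H\cap C^2}$, the goal is to prove that this last condition is equivalent to the conjunction $G=HC^*$ and $H=K$.

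For the forward direction, connectivity already gives $G=HC^*$ by Corollary \ref{connectioncondition}, so the real content is $H=K$. I expect the bare product equality $H=(H\cap gC)K$ to be the main obstacle, because $(H\cap gC)K$ is merely a union of left cosets of the subgroup $K$ and so can exhaust $H$ even when $K\neq H$; the algebraic form alone is therefore too weak to conclude. To get around this I would argue directly from the bipartite structure of $\G$. Since $H\cap C=\emptyset$, every edge joins a vertex of $H$ to a vertex of $G\setminus H$, so $\G$ is bipartite with parts $H$ and $G\setminus H$. Tracing an even walk $1=h_0,x_1,h_1,x_2,\ldots$ with $h_i\in H$ and $x_i\in G\setminus H$, the step $h_i\to x_{i+1}$ forces $h_i^{-1}x_{i+1}\in C$ and the step $x_{i+1}\to h_{i+1}$ forces $x_{i+1}^{-1}h_{i+1}\in C$, so $h_i^{-1}h_{i+1}\in H\cap C^2$ and hence $h_{i+1}\in h_i(H\cap C^2)$. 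A matching argument (factoring each $d\in H\cap C^2$ as $d=cc'$ with $c,c'\in C$) shows the reachable set is closed under right multiplication by $H\cap C^2$. Thus the set of $H$-vertices in the component of $1$ is exactly $K$, and connectivity forces $K=H$.

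For the converse, assume $G=HC^*$ and $H=K$. Since $H$ is a proper subgroup, $G=HC^*$ forces $C\neq\emptyset$; pick $c_0\in C$ and set $g=c_0$, which lies in $G\setminus H$ because $H\cap C=\emptyset$. As $C$ is inverse closed, $c_0^{-1}\in C$, so $1=c_0c_0^{-1}\in H\cap gC$; in particular $H\cap gC\neq\emptyset$. Then $(H\cap gC)K=(H\cap gC)H=H$, which is precisely the simplified condition, so $\G$ is connected by Theorem \ref{connectivity}. The only delicate points are the nonemptiness observations and the bipartite walk count that pins down the $H$-part of the component of $1$; the remaining manipulations are routine bookkeeping.
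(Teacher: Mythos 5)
Your argument is correct as a derivation of the corollary from the paper's stated results, but it does not follow the route the paper intends. The paper gives no proof: the corollary is meant to be an immediate specialization of Theorem \ref{connectivity} (using $\gen{H\cap C}=\gen{\emptyset}=1$ and $C\setminus H=C$) together with Corollary \ref{connectioncondition}. Where you diverge is the forward direction: you discard the specialized condition $H=(H\cap gC)\gen{H\cap C^2}$ and instead run a self-contained bipartite walk argument. That argument is sound, but your stated reason for the detour --- that the product equality is ``too weak'' because $(H\cap gC)\gen{H\cap C^2}$ is merely a union of cosets --- is mistaken. Write $K=\gen{H\cap C^2}$. Any two elements $gc_1,gc_2\in H\cap gC$ satisfy $(gc_1)^{-1}(gc_2)=c_1^{-1}c_2\in H\cap C^2\subseteq K$ (using that $C$ is inverse closed), so $H\cap gC$ lies in a \emph{single} left coset $hK$ with $h\in H$; moreover $H\cap gC\neq\emptyset$, else $H=(H\cap gC)K=\emptyset$. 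Hence $H=(H\cap gC)K\subseteq hK$, forcing $H=hK$ and therefore $K=H$. This two-line coset observation is exactly what makes the corollary ``immediate,'' and it is shorter than your walk analysis.

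That said, your walk argument buys something real, because Theorem \ref{connectivity} as literally stated is defective: its displayed condition does not by itself imply connectivity, since it ignores the vertices outside $HC^*$, which are isolated by Lemma \ref{isolatedvertex}. For instance, take $G=\Z_2^3=\gen{a}\times\gen{b}\times\gen{c}$, $H=\gen{a}$, $C=\{b,ab\}$: the condition of Theorem \ref{connectivity} holds with $g=b$, yet $c$ is an isolated vertex (the theorem's proof silently replaces the requirement $G=M\gen{H\cap(C\setminus H)^2}\cup M\gen{H\cap(C\setminus H)^2}(C\setminus H)$ by only its $H$-part, dropping $G=HC^*$). Your forward direction is immune to this because it never invokes the theorem. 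Your converse, however, invokes precisely the defective implication ``condition $\Rightarrow$ connected''; it is sound here only because you also assume $G=HC^*$, which is exactly the missing hypothesis. To make the converse independent of the flawed statement, note that your own walk construction already shows that when $H=\gen{H\cap C^2}$ all of $H$ lies in one component, and $G=HC^*$ gives every $x\in G\setminus H$ a neighbor in $H$ (write $x=hc$ with $c\in C$; then $x\sim h$). With that one sentence, your proof is complete and fully self-contained.
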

%--------------------------------------------------
\begin{remark}
A group $K$ is an \textit{$ABA$-group} if it has two proper subgroups $A$ and $B$ such that $K=ABA$. Utilizing this notion, if $H$ is not an $ABA$-group, then $\G$ is connected if and only if $G=HC^*$ and either $H\cap C$ or $H\cap (C\setminus H)^2$ generates $H$.
\end{remark}

Let $K$ be a group generated by a set $X$. The \textit{width} of $K$ with respect to $X$, denoted by $w(K,X)$, is the minimum non-negative integer $n$ such that
\[K=X^0\cup X\cup X^2\cup\cdots\cup X^n,\]
where $X^0=\{1\}$. If there is no such $n$, we define $w(K,X)$ to be $\infty$.
%--------------------------------------------------
\begin{corollary}\label{diameter}
If $\G$ is connected, then
\[\diam(\G)\leq2+w(\gen{H\cap C},H\cap C)+2w(\gen{H\cap(C\setminus H)^2},H\cap(C\setminus H)^2).\]
In particular,
\[\diam(\G)\leq2+\frac{1}{2}|\gen{H\cap C}|+|\gen{H\cap(C\setminus H)^2}|\leq\frac{3}{2}|H|+2.\]
Also, 
\[\diam(\G)\leq|H|+2\]
if $H\cap C=\emptyset$ and
\[\diam(\G)\leq\frac{1}{2}|H|+2\]
if $H\cap(C\setminus H)^2=\{1\}$.
\end{corollary}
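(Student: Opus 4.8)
The plan is to reduce the diameter to distances inside $H$ and then to count two kinds of moves. First I would record the structural fact that is immediate from the adjacency rule: every edge of $\G$ has an endpoint in $H$, so $G\setminus H$ is an independent set, and once $\G$ is connected every vertex of $G\setminus H$ is non-isolated (Lemma \ref{isolatedvertex}) and therefore has all of its neighbours in $H$. Consequently every vertex lies at distance at most $1$ from $H$, and for arbitrary $u,v$ one may pass to neighbours $a,b\in H$ of $u,v$ (or take $a=u$, $b=v$ if they already lie in $H$). This yields $\diam(\G)\le 2+d_H$, where $d_H=\max_{a,b\in H}d_\G(a,b)$, so it suffices to bound $d_H$.

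Next I would analyse a walk between two vertices of $H$. Because $G\setminus H$ is independent, such a walk is built from two kinds of moves: steps that stay in $H$, each multiplying the current vertex by an element of $H\cap C$ and costing one edge, and excursions that leave $H$ and immediately return, multiplying by an element $c_1c_2\in H\cap(C\setminus H)^2$ with $c_1,c_2\in C\setminus H$ and costing two edges. To reach $b$ from $a$ I would realise the $\gen{H\cap C}$-part of $a^{-1}b$ first, staying inside $H$ at a cost of at most $w(\gen{H\cap C},H\cap C)$ steps by the definition of width, and then realise the $\gen{H\cap(C\setminus H)^2}$-part through excursions, at a cost of at most $2\,w(\gen{H\cap(C\setminus H)^2},H\cap(C\setminus H)^2)$ steps. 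That this two-phase, unmixed procedure actually sweeps out all of $H$ is exactly the decomposition $H=(H\cap gC)\gen{H\cap C}\gen{H\cap(C\setminus H)^2}$ supplied by Theorem \ref{connectivity}; this is the step that carries the real weight of the argument, since it is what licenses performing the $\gen{H\cap C}$-phase and the $\gen{H\cap(C\setminus H)^2}$-phase consecutively rather than interleaving them. Combining the two phase bounds gives $d_H\le w(\gen{H\cap C},H\cap C)+2\,w(\gen{H\cap(C\setminus H)^2},H\cap(C\setminus H)^2)$, and with the reduction above this is the displayed inequality.

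For the numerical estimates I would invoke the inequality $w(K,X)\le\tfrac12|K|$, valid whenever $X$ is inverse-closed and generates $K$. To justify it, read $w(K,X)$ as the diameter $d$ of the connected vertex-transitive graph $\Cay(K,X)$: a nontrivial connected vertex-transitive graph is $2$-connected, hence has no cut vertex, so each sphere $S_i$ about the identity with $1\le i\le d-1$ contains at least two vertices, whence $|K|=\sum_{i=0}^{d}|S_i|\ge 1+2(d-1)+1=2d$. Applying this to both widths and using $|\gen{H\cap C}|,|\gen{H\cap(C\setminus H)^2}|\le|H|$ gives $2+\tfrac12|\gen{H\cap C}|+|\gen{H\cap(C\setminus H)^2}|\le\tfrac32|H|+2$. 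The two special cases are then read off directly: if $H\cap C=\emptyset$ then $\gen{H\cap C}=\{1\}$, the first width is $0$, and since $C\setminus H=C$ the bound becomes $2+2\,w(\gen{H\cap C^2},H\cap C^2)\le|H|+2$; and if $H\cap(C\setminus H)^2=\{1\}$ then the second width is $0$, leaving $2+w(\gen{H\cap C},H\cap C)\le\tfrac12|H|+2$. The main obstacle throughout is the ordered-decomposition step above, where Theorem \ref{connectivity} is essential; the width inequality $w(K,X)\le\tfrac12|K|$ is the only other nontrivial ingredient.
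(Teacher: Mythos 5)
Your reduction $\diam(\G)\le 2+d_H$ is fine, and your self-contained proof of the width inequality $w(K,X)\le\tfrac12|K|$ via $2$-connectivity of vertex-transitive graphs is a genuine improvement over the paper, which merely cites the unpublished \cite{mfdg} for the numerical consequences. The gap is in your central step, the claim $d_H\le w_1+2w_2$ where $w_1=w(\gen{H\cap C},H\cap C)$ and $w_2=w(\gen{H\cap(C\setminus H)^2},H\cap(C\setminus H)^2)$. Your two-phase walk from $a$ to $b$ requires $a^{-1}b\in\gen{H\cap C}\gen{H\cap(C\setminus H)^2}$ for \emph{every} pair $a,b\in H$, i.e.\ $H=\gen{H\cap C}\gen{H\cap(C\setminus H)^2}$. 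Theorem \ref{connectivity} does not supply this: it supplies $H=(H\cap gC)\gen{H\cap C}\gen{H\cap(C\setminus H)^2}$, a product of \emph{three} factors, and the factor $H\cap gC$ cannot be discarded, since the product of the two subgroups need not be a subgroup and need not exhaust $H$. The decomposition is anchored at the distinguished vertex $g$: it describes how to reach every element of $H$ starting from $g$ (one edge into $N_\G(g)=H\cap gC$, then an $\gen{H\cap C}$-phase, then excursions), not how to join two arbitrary vertices of $H$ by an unmixed two-phase walk. This is precisely where the paper's count differs from yours: starting at $g$, the first of the two "extra" edges consumes the factor $H\cap gC$, giving $d_\G(g,h)\le 1+w_1+2w_2$ for all $h\in H$ and one further edge for vertices outside $H$; your symmetric "enter $H$, walk, exit $H$" reading of the $+2$ throws that factor away and silently needs the strictly stronger statement $H=\gen{H\cap C}\gen{H\cap(C\setminus H)^2}$.

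That stronger statement really can fail under the corollary's hypothesis. Take $G=S_3\times\gen{t}$ with $t^2=1$, $H=S_3\times\{1\}$, and $C=\{((12),1),\,(1,t),\,((13),t)\}$ (inverse-closed). Then $H\cap C=\{(12)\}$, $C\setminus H=\{(1,t),((13),t)\}$ and $H\cap(C\setminus H)^2=\{1,(13)\}$, so both widths equal $1$; with $g=(1,t)$ one has $H\cap gC=\{1,(13)\}$ and $(H\cap gC)\gen{(12)}\gen{(13)}=S_3$, so the criterion of Theorem \ref{connectivity} holds and $\G$ is connected. Yet $(23)\notin\gen{(12)}\gen{(13)}=\{1,(12),(13),(12)(13)\}$, and in $\G$ one computes $d_\G(1,(23))=4>3=w_1+2w_2$: any walk from $1$ to $(23)$ inside $H$ must interleave $(12)$-steps (cost $1$) with $(13)$-excursions (cost $2$), the cheapest being the word $(12)(13)(12)$. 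So your intermediate inequality $d_H\le w_1+2w_2$ is false, even though the final bound $2+w_1+2w_2=5$ happens to survive in this example (the diameter is $4$). To repair the argument you must keep the walk anchored at $g$ as the paper does, accepting that this literally bounds the eccentricity of $g$ rather than the diameter, or else prove separately that connectivity forces $H=\gen{H\cap C}\gen{H\cap(C\setminus H)^2}$ --- which the example above shows it does not.
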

\begin{proof}
The first inequality follows directly from Theorem \ref{connectivity} by counting the number of steps required to reach to every vertex of $\G$ by starting from an arbitrary element. Also, the other inequalities follows from \cite{mfdg}. Note that $H=\gen{H\cap C}$ whenever $H\cap(C\setminus H)^2=\{1\}$ and $\G$ is connected. Also, $H=\gen{H\cap(C\setminus H)^2}$ whenever $H\cap C=\emptyset$ and $\G$ is connected.
\end{proof}
%--------------------------------------------------
\begin{example}
\ 
\begin{itemize}
\item Let $G=\gen{a,b:a^{2n}=b^2=1,a^b=a^{-1}}$ be the dihedral group of order $4n$ and $H=\gen{a}$. If $C=\{a^{\pm1},b\}$, then it is easy to see that $\G=\Cay(G,H,C)$ is a corona $2n$-cycle, that is, a graph obtained from a cycle of length $2n$ by attaching a new pendant to each of its vertices. Clearly, $\diam(\G)=n+2=\frac{1}{2}|H|+2$. 

\item Let $G=\gen{a:a^{4mn}=1}$ be the cyclic group of order $4mn$ and $H=\gen{a^{2n}}$. If $C=\{a^{\pm1},\ldots,a^{\pm n}\}$, then it is easy to see that $\G=\Cay(G,H,C)$ is a bipartite graph with $\diam(\G)=2m+2=|H|+2$.
\end{itemize}
The above examples show that the last two bounds in Corollary \ref{diameter} are sharp.
\end{example}
%=====================================================
\section{Numerical invariants}
In this section, we shall consider various numerical invariants of the relative Cayley graphs. In what follows $\omega(\G)$, $\alpha(\G)$, $\alpha'(\G)$, $\beta(\G)$, $\beta'(\G)$, $\chi(\G)$ and $\chi'(\G)$ denote the clique number, independence number, edge independence number, dominating number, edge dominating number, chromatic number and edge chromatic number of $\G$, respectively.
%--------------------------------------------------
\begin{theorem}\label{cliquenumberupperbound}
$\omega(\G)\leq|H\cap C|+2$ and the equality holds if and only if $(H\cap C)^*\leq G$ and $H\cap C\subseteq cC$ for some $c\in C\setminus H$.
\end{theorem}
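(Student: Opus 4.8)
The plan is to exploit that every edge of $\G$ has an endpoint in $H$, so that no two vertices of $G\setminus H$ are adjacent; hence any clique contains at most one vertex outside $H$. I would open by noting that left multiplication by an element of $H$ is an automorphism of $\G$: if $h\in H$ and $\{x,y\}$ is an edge with, say, $x\in H$ and $x^{-1}y\in C$, then $hx\in H$ and $(hx)^{-1}(hy)=x^{-1}y\in C$, so $\{hx,hy\}$ is again an edge. Since any clique of size at least two has a vertex in $H$, translating by the inverse of such a vertex lets me assume that a maximum clique $K$ contains the identity $1$.

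With $1\in K$, Lemma \ref{degrees} gives $N_\G(1)=C$, so $K\setminus\{1\}\subseteq C$. I would split $K\setminus\{1\}=A\cup B$ with $A=K\cap(H\cap C)$ and $B=K\cap(C\setminus H)$. The observation that at most one clique-vertex lies outside $H$ gives $|B|\le 1$, while $A\subseteq H\cap C$ gives $|A|\le|H\cap C|$. Hence $|K|=1+|A|+|B|\le|H\cap C|+2$, which is the asserted inequality.

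For the equality case I would determine exactly when $|K|=|H\cap C|+2$ is reached. This forces $A=H\cap C$ and $B=\{c\}$ for a single $c\in C\setminus H$. First, $A\cup\{1\}=(H\cap C)^*$ must itself be a clique of $\G'=\Cay(H,H\cap C)$; unwinding adjacency, this means $u^{-1}v\in H\cap C$ for all distinct $u,v\in(H\cap C)^*$, equivalently $(H\cap C)^*(H\cap C)^*\subseteq(H\cap C)^*$ after using that $H\cap C$ is inverse-closed. Since a finite submonoid of a group is a subgroup, this is precisely $(H\cap C)^*\le G$. Second, $c$ must be adjacent to every $a\in H\cap C$, i.e. $a^{-1}c\in C$, i.e. $a\in cC^{-1}=cC$; running over all such $a$ this reads $H\cap C\subseteq cC$ (adjacency of $c$ to $1$ being automatic from $c\in C$). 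Conversely, under these two conditions $\{1\}\cup(H\cap C)\cup\{c\}$ is a clique of size $|H\cap C|+2$, closing the equivalence.

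The step I expect to be the crux is the translation between the combinatorial clique condition on $(H\cap C)^*$ and the algebraic statement $(H\cap C)^*\le G$: the one-sided/two-sided discrepancy ($a^{-1}a'$ versus $aa'$) must be absorbed by the inverse-closedness of $C$, and the passage from multiplicative closure to subgroup relies on finiteness of $G$. I would state both of these invocations explicitly. A secondary point worth checking is that the extremal clique genuinely uses an outside vertex, since a clique lying entirely in $H$ attains only $|H\cap C|+1$; this is why the condition is phrased with $c\in C\setminus H$ rather than merely $c\in C$.
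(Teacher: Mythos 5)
Your proof is correct and follows essentially the same route as the paper's: at most one clique vertex lies outside $H$, the remaining vertices differ from a base vertex by elements of $H\cap C$, and the equality analysis extracts both the subgroup condition on $(H\cap C)^*$ and the containment $H\cap C\subseteq cC$. Your only deviation---normalizing the maximum clique to contain $1$ via left translation by an element of $H$---is cosmetic, since the paper runs the identical argument with an arbitrary base vertex $y\in H$ (and note that the closure under $u^{-1}v$ you establish already gives a subgroup by the standard criterion, without needing finiteness).
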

\begin{proof}
Let $X$ be a clique in $\G$. Clearly $|X\setminus H|\leq 1$. If $y\in X\cap H$, then for every $z\in(X\cap H)\setminus\{y\}$ there exist $c_z\in C$ such that $z=yc_z$. Thus $c_z=y^{-1}z\in H$ so that $c_z\in H\cap C$. Therefore $|X|\leq|H\cap C|+2$ and consequently $\omega(\G)\leq|H\cap C|+2$. 

Now, assume that $\omega(\G)=|H\cap C|+2$. Then there exist a clique $X$ with $|H\cap C|+2$ elements. Clearly, $X=\{x,y\}\cup y(H\cap C)$ for some $x\in G\setminus H$ and $y\in H$. Let $c_1,c_2\in H\cap C$. Since $X$ is complete, there exist $c\in C$ such that $yc_2=yc_1c$. Thus $c=c_1^{-1}c_2\in H\cap C$, from which it follows that $(H\cap C)^*$ is a subgroup of $G$. On the other hand, $x=yc$ for some $c\in C\setminus H$. Since $x$ is adjacent to $yc'$ for all $c'\in H\cap C$, it follows that $H\cap C\subseteq cC$. The converse is straightforward.
\end{proof}
%--------------------------------------------------
\begin{corollary}
For every group $G$ and proper subgroup $H$, there exists a subset $C$ such that $\omega(\G)=|H\cap C|+2$.
\end{corollary}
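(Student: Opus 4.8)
The plan is to reduce everything to the equality criterion already established in Theorem \ref{cliquenumberupperbound}: since $\omega(\G)\le|H\cap C|+2$ always, and equality is equivalent to the two conditions $(H\cap C)^*\le G$ and $H\cap C\subseteq cC$ for some $c\in C\setminus H$, it suffices to \emph{exhibit} one inverse-closed set $C\subseteq G\setminus\{1\}$ satisfying these two conditions. No further argument about cliques is then needed. To dispose of the first condition in the cleanest possible way, I would simply force $(H\cap C)^*=H$ by taking $H\cap C=H\setminus\{1\}$; then $(H\cap C)^*=H\le G$ holds automatically and there is nothing to check. The whole problem thus collapses to arranging the second condition, i.e.\ producing an element $c\in C\setminus H$ with $H\setminus\{1\}\subseteq cC$.

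For the witness, fix any $g\in G\setminus H$ (possible since $H$ is proper) and take $c=g$. Because $g\bigl(g^{-1}(H\setminus\{1\})\bigr)=H\setminus\{1\}$, it is enough to guarantee $g^{-1}(H\setminus\{1\})\subseteq C$, whence $H\setminus\{1\}\subseteq gC$ follows immediately. I would therefore define
\[C=(H\setminus\{1\})\ \cup\ g^{-1}(H\setminus\{1\})\ \cup\ (H\setminus\{1\})g\ \cup\ \{g,g^{-1}\},\]
where the third summand is included solely to keep $C$ inverse-closed, since $\bigl(g^{-1}(H\setminus\{1\})\bigr)^{-1}=(H\setminus\{1\})g$. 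With this choice $c=g\in C\setminus H$ and the containment $H\setminus\{1\}\subseteq gC$ are visible by inspection.

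The one point that genuinely needs care---and the step I expect to be the main obstacle---is confirming that adjoining the helper elements outside $H$ neither disturbs the identity $H\cap C=H\setminus\{1\}$ nor spoils inverse-closedness or the requirement $1\notin C$. This is exactly where coset disjointness does the work: as $g\notin H$, the cosets $g^{-1}H$ and $Hg$ are distinct from $H$, so all three outside pieces $g^{-1}(H\setminus\{1\})$, $(H\setminus\{1\})g$, $\{g,g^{-1}\}$ avoid $H$ (giving $H\cap C=H\setminus\{1\}$) and avoid $1$ (since $1\in g^{-1}H$ or $1\in Hg$ would force $g\in H$); inverse-closedness then holds termwise. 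Both conditions of Theorem \ref{cliquenumberupperbound} being verified, we conclude $\omega(\G)=|H\cap C|+2=|H|+1$, the realizing clique being $H\cup\{g\}$. I note in passing that for the bare existence statement one could even take $C=\{g,g^{-1}\}$, for which $H\cap C=\emptyset$, $(H\cap C)^*=\{1\}\le G$, and the inclusion $\emptyset\subseteq gC$ is trivial, yielding equality with $\omega(\G)=2$; the construction above is preferable precisely because it shows the bound is attained at its largest possible value.
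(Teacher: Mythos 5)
Your proposal is correct, and it is essentially the paper's own construction: the paper likewise reduces everything to the equality criterion of Theorem \ref{cliquenumberupperbound}, taking a subgroup $K\leq H$, an element $c\in G\setminus H$, and any inverse-closed $C\supseteq(K\setminus\{1\})\cup c^{-1}(K\setminus\{1\})\cup(K\setminus\{1\})c$ with $C\cap H=K\setminus\{1\}$; your $C$ is exactly the special case $K=H$, $c=g$. The one substantive difference is in your favour: you adjoin $\{g,g^{-1}\}$ to $C$, and this is precisely what places the witness $g$ inside $C\setminus H$, as the criterion of Theorem \ref{cliquenumberupperbound} literally demands (``for some $c\in C\setminus H$''). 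The paper omits this step, and its proof as written is not quite right: for $G=S_3$, $H=K=\langle(1\,2)\rangle$ and $c=(1\,2\,3)$, the minimal admissible choice is $C=\{(1\,2),(2\,3)\}$, for which $C\setminus H=\{(2\,3)\}$ and $(1\,2)\notin(2\,3)C$, so the second condition of the theorem fails; indeed $H\cap C\cap C^2=\emptyset$, so this relative Cayley graph is triangle-free and $\omega(\G)=2<3=|H\cap C|+2$. So your helper elements are not cosmetic --- they close a small but genuine gap in the paper's argument --- and they also let you exhibit the extremal clique $H\cup\{g\}$ directly, which, together with the upper bound $\omega(\G)\leq|H\cap C|+2$, proves the corollary even without appealing to the equality criterion at all.
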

\begin{proof}
Let $K$ be a subgroup of $H$ and $c\in G\setminus H$. If
\[C\supseteq (K\setminus\{1\})\cup c^{-1}(K\setminus\{1\})\cup(K\setminus\{1\})c\]
is any inverse closed subset of $G\setminus\{1\}$ such that $C\cap H=K\setminus\{1\}$, then the requirements of Theorem \ref{cliquenumberupperbound} are satisfied, and hence $\omega(\Cay(G,H,C))=|K|+1=|H\cap C|+2$, as required.
\end{proof}
%------------------------------------------------
\begin{definition}
Let $G$ be a group and $X$ be an arbitrary subset of $G$. Then 
\[\psi(X)=\max\{|H|:H\subseteq X^*,H\leq G\}\]
is the maximum order of subgroups contained in $X^*$.
\end{definition}
%-----------------------------------------------
\begin{theorem}\label{cliquenumberbounds}
We have
\begin{itemize}
\item[(i)]$\omega(\G)\geq\psi(H\cap C)$. Moreover, $\omega(\G)\geq\psi(H\cap C)+1$ if there exists $c\in C\setminus H$ such that $K\subseteq cC$ for some subgroup $K$ of $G$ such that $K\subseteq(H\cap C)^*$ and $|K|=\psi(H\cap C)$.
\item[(ii)]If $C^3\subseteq C$, then $\omega(\G)\leq\psi(H\cap C)+1$.
\end{itemize}
\end{theorem}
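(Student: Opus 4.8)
The plan is to treat the two parts separately; part (i) is a direct construction of large cliques, while part (ii) hinges on decoding the hypothesis $C^3\subseteq C$.

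For part (i), let $K\leq G$ attain the maximum in the definition of $\psi(H\cap C)$, so that $K\subseteq(H\cap C)^*$ and $|K|=\psi(H\cap C)$. Since $(H\cap C)^*\subseteq H$ we have $K\leq H$, and for any two distinct $k_1,k_2\in K$ the element $k_1^{-1}k_2$ lies in $K\setminus\{1\}\subseteq H\cap C\subseteq C$ with $k_1\in H$; hence $k_1$ and $k_2$ are adjacent and $K$ is a clique, giving $\omega(\G)\geq\psi(H\cap C)$. For the improvement by $1$, suppose $c\in C\setminus H$ satisfies $K\subseteq cC$ for such a $K$. Because $C$ and $K$ are inverse-closed, $K\subseteq cC$ is equivalent to $Kc\subseteq C$, so for every $k\in K$ we get $k^{-1}c\in Kc\subseteq C$; as $k\in H$, the vertex $c$ is adjacent to $k$. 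Since $c\notin H\supseteq K$, the set $K\cup\{c\}$ is a clique of size $\psi(H\cap C)+1$, and $\omega(\G)\geq\psi(H\cap C)+1$.

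For part (ii) I would first extract the structure forced by $C^3\subseteq C$. Assuming $C\neq\emptyset$ (otherwise $\G$ is edgeless and $\omega(\G)=1$), note that $L:=C^2$ satisfies $LL=C^4\subseteq C^2=L$, so the non-empty inverse-closed set $L$ is closed under multiplication in the finite group $G$ and is therefore a subgroup containing $1$. For each $c\in C$ one has $cL=cC^2\subseteq C^3\subseteq C$, so $C$ is a union of left cosets of $L$; moreover $C\cap L=\emptyset$ (else $1\in C$), whence $\gen{C}=L\cup C$ is a disjoint union. In the quotient $\gen{C}/L$ the image of $C$ consists of all non-identity cosets and, since $C^2=L$, the product of any two of them is trivial; this is possible only if $\gen{C}/L$ has a single non-identity element, i.e.\ $[\gen{C}:L]=2$ and $C=sL$ is the unique non-trivial coset of $L$.

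The main obstacle is precisely this reduction: recognizing that $C^3\subseteq C$ collapses $C$ onto one coset of the subgroup $C^2$. Once it is available, the conclusion is immediate, since $\G$ is then triangle-free: if $x,y,z$ were pairwise adjacent we would have $x^{-1}y,\,y^{-1}z,\,x^{-1}z\in C=sL$, yet $x^{-1}z=(x^{-1}y)(y^{-1}z)\in(sL)(sL)$, and writing $sl_1\cdot sl_2\in sL$ would force $s\in L$; hence $(sL)(sL)\cap sL=\emptyset$, contradicting $x^{-1}z\in sL$. Thus $\omega(\G)\leq2$, and since the trivial subgroup lies in $(H\cap C)^*$ we always have $\psi(H\cap C)\geq1$, giving $\omega(\G)\leq2\leq\psi(H\cap C)+1$, as claimed.
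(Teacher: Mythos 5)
Your proposal is correct, and part (ii) takes a genuinely different route from the paper's. Part (i) is exactly the straightforward construction the paper omits (it literally says ``It is straightforward''), so nothing to compare there. For part (ii), the paper argues intrinsically on cliques: it takes a clique $X$ with $|X\cap H|$ maximum, attaches to $x\in X\cap H$ the set $K_x=\{1\}\cup\{c\in C:x\sim xc\in X\cap H\}$, and uses $C^3\subseteq C$ together with the maximality of $|X\cap H|$ to show that $K_x$ is closed under $c_1,c_2\mapsto c_2c_1^{-1}$, hence a subgroup contained in $(H\cap C)^*$, giving $\omega(\G)\leq|K_x|+1\leq\psi(H\cap C)+1$. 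You instead decode the hypothesis globally: $C^2$ is a subgroup, $C$ is a single coset of $C^2$ disjoint from it, so $\G$ is triangle-free and $\omega(\G)\leq2\leq\psi(H\cap C)+1$. Your reduction is essentially the paper's own Remark following the theorem (which records $C=Dc$ with $D=C^2\leq G$) combined with its later Lemma that $\G$ is triangle-free if and only if $H\cap C\cap C^2=\emptyset$; the paper never puts these two observations together. What your route buys: a strictly stronger conclusion, $\omega(\G)\leq 2$, and indeed the same disjointness $C\cap C^2=\emptyset$ forces $\psi(H\cap C)\leq2$, which exposes that under this hypothesis the stated bound is never better than the trivial triangle-free bound. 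What the paper's argument buys: it ties cliques inside $H$ directly to subgroups of $(H\cap C)^*$ and invokes $C^3\subseteq C$ only on products of clique differences, so it is the argument that would survive a weakening of the hypothesis, whereas yours collapses entirely without the full coset structure.

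One small repair: your quotient step presumes $L=C^2$ is normal in $\gen{C}$, which is true but unverified (e.g.\ $cLc^{-1}\subseteq C^4=C^2=L$ for $c\in C$). You can bypass the quotient altogether: any $c_1,c_2\in C$ satisfy $c_1^{-1}c_2\in C^2=L$, so $C$ lies in a single left coset of $L$, and $cL=cC^2\subseteq C^3\subseteq C$ gives $C=cL$. In fact your final triangle-freeness argument uses only $C\cap C^2=\emptyset$, which you establish independently of the coset structure, so the proof stands with this cosmetic fix.
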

\begin{proof}
(i) It is straightforward.

(ii) Let $X$ be a clique in $\G$ such that $|X\cap H|$ is maximum and 
\[K_x=\{1\}\cup\{c\in C:x\sim xc\in X\cap H\}\]
for all $x\in X\cap H$. Let $c_1,c_2$ be two nontrivial elements of $K_x$. Then $xc_1,xc_2\in X\cap H$ so that $xc_1\sim x c_2$ and so $x\sim xc_2c_1^{-1}$. Let $c\in C$ be any element such that $xc\in X\cap H$. Since $C^3\subseteq C$, $c'=c^{-1}c_2c_1^{-1}\in C$. Now, $c_2c_1^{-1}=cc'$, which implies that $xc\sim xc_2c_1^{-1}$. Therefore, $xc_2c_1^{-1}$ is adjacent to all elements of $xK_x$. If $c_2{c_1}^{-1}\notin K_x$, then $Y=(X\cap H)\cup\{xc_2c_1^{-1}\}$ induces a complete subgraph of $\G$ such that $|Y\cap H|=|X\cap H|+1$, which contradicts the maximality of $X\cap H$. Hence,	 $c_2c_1^{-1}\in K_x$, from which it follows that $K_x$ is a subgroup of $G$. Therefore, 
\[\omega(\G)\leq|X|+1=|K_x|+1\leq\psi(X)+1\leq\psi(H\cap C)+1,\]
as required.
\end{proof}
%--------------------------------------------------
\begin{remark}
If $C^3\subseteq C$, then $C=Dc$ for some subgroup $D$ of $G$ and element $c\in C$ such that $c^2\in D$ and $D^c=D$. To see this, note that $C\subseteq C^3$. Thus $C^3=C$ and consequently $C^4=C^2$. Hence, $D=C^2$ is a subgroup of $G$. Since $C=DC$, we have $c'Dc^{-1}D\subseteq D$ for all $c,c'\in C$, which implies that $c'c^{-1}\in D$ or $c'\subseteq Dc$. Therefore, $C=Dc$. Moreover, $cDc^{-1}D\subseteq D$, which implies that $D^c\subseteq D$. Hence $D^c=D$.
\end{remark}
%--------------------------------------------------
\begin{theorem}
We have
\begin{itemize}
\item[(i)]$\alpha (\G)=\beta'(\G)=|G\setminus H|$.
\item[(ii)]$\alpha'(\G)=\beta(\G)=|H|$.
\end{itemize}
\end{theorem}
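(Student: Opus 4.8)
The plan is to read off all four equalities from two structural features of $\G$ together with a single explicit matching that saturates $H$, after which everything follows from the standard complementarity between independent sets and covers. The two features are immediate from the adjacency rule: every edge of $\G$ has at least one endpoint in $H$, so $H$ is a vertex cover and $\beta(\G)\le|H|$; dually $G\setminus H$ is an independent set, so $\alpha(\G)\ge|G\setminus H|$.

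The key step is to produce a matching of size $|H|$. Since the edge‑covering number is only defined when there are no isolated vertices, I work under $G=HC^*$ (Lemma~\ref{isolatedvertex}), which forces $C\not\subseteq H$; fix $c_0\in C\setminus H$. Then $M=\{\{h,hc_0\}:h\in H\}$ is a matching: each $\{h,hc_0\}$ is an edge because $h\in H$ and $h^{-1}(hc_0)=c_0\in C$, and the edges are pairwise disjoint because the vertices $hc_0$ lie in $G\setminus H$ (as $c_0\notin H$) and are distinct. Part (ii) is then immediate: $\alpha'(\G)\ge|M|=|H|$, while $\alpha'(\G)\le|H|$ since the edges of any matching use distinct vertices of the cover $H$, so $\alpha'(\G)=|H|$; and $|H|=\alpha'(\G)\le\beta(\G)\le|H|$ gives $\beta(\G)=|H|$.

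For part (i) I would argue both invariants against $M$. Given any independent set $I$, the map $h\mapsto hc_0$ sends $H\cap I$ injectively into $(G\setminus H)\setminus I$, because if $h\in I$ then its neighbour $hc_0\notin I$; hence $|I|=|H\cap I|+|(G\setminus H)\cap I|\le|G\setminus H|$, and with the lower bound $\alpha(\G)=|G\setminus H|$. For $\beta'(\G)$, every edge covers at most one vertex of the independent set $G\setminus H$, so covering all of them needs at least $|G\setminus H|$ edges and $\beta'(\G)\ge|G\setminus H|$; conversely $M$ already covers $H$ together with $|H|$ vertices of $G\setminus H$, and appending one incident edge for each of the remaining $|G\setminus H|-|H|$ vertices of $G\setminus H$ (possible as no vertex is isolated) produces an edge cover of size $|G\setminus H|$, so $\beta'(\G)=|G\setminus H|$.

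The main obstacle, and the only place a hypothesis is really used, is the existence of the element $c_0\in C\setminus H$ underlying $M$: this is exactly where the non‑degeneracy $G=HC^*$ enters, since if $C\subseteq H$ then all of $G\setminus H$ is isolated and each stated equality breaks. Everything after the construction of $M$ is routine complementary counting of the Gallai type.
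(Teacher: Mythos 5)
Your proof is correct, and its core coincides with the paper's: both arguments rest on the independence of $G\setminus H$, the matching $M=\{\{h,hc_0\}:h\in H\}$ for a fixed $c_0\in C\setminus H$, and an injection pushing $I\cap H$ into the part of $G\setminus H$ missed by $I$ (the paper phrases this via the set product $(I\cap H)(C\setminus H)$, you via the single translate $h\mapsto hc_0$, which is the same idea in cleaner form). Where you genuinely diverge is in how the covering numbers are obtained: the paper invokes the Gallai identities $\beta(\G)=|G|-\alpha(\G)$ and $\beta'(\G)=|G|-\alpha'(\G)$ from the Balakrishnan--Ranganathan textbook, whereas you prove them directly --- $\beta(\G)=|H|$ by sandwiching between the weak duality bound $\alpha'(\G)\le\beta(\G)$ and the vertex cover $H$, and $\beta'(\G)=|G\setminus H|$ by exhibiting an explicit edge cover (extend $M$ by one incident edge per vertex of $(G\setminus H)\setminus Hc_0$) against the lower bound that each edge covers at most one vertex of the independent set $G\setminus H$. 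This makes your argument self-contained at no real cost in length. A further merit of your write-up is that you isolate the non-degeneracy hypothesis $G=HC^*$ (equivalently, $C\not\subseteq H$ and no isolated vertices) under which the statement is true; the paper uses $C\setminus H\neq\emptyset$ silently, both when asserting that every element of $H$ has a neighbour in $G\setminus H$ and in the inequality $|(I\cap H)(C\setminus H)|\ge|I\cap H|$, and the theorem does fail when $C\subseteq H$: there $G\setminus H$ consists of isolated vertices, $\{1\}\cup(G\setminus H)$ is independent so $\alpha(\G)>|G\setminus H|$, and $\beta'(\G)$ is not even defined.
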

\begin{proof}
Since $G\setminus H$ is independent and each element of $H$ is adjacent to some element of $G\setminus H$, one gets $G\setminus H$ is a maximal independent set of $\G$. Now, suppose that $I$ is an arbitrary independent set with maximum cardinality. We prove that $|I|=|G\setminus H|$. Clearly, $I=(I\cap H)\cup(I\cap (G\setminus H))$. Since $I$ is an independent set, $I\cap (I\cap H)C=\emptyset$ and by the same argument as before $(I\cap H)(C\setminus H)\cap(I\cap(G\setminus H))=\emptyset$. Therefore $I\cap (G\setminus H)\subseteq (G\setminus H)\setminus (I\cap H)(C\setminus H)$ so that 
\[|I\cap(G\setminus H)|\leq|G\setminus H|-|(I\cap H)(C\setminus H)|\leq|G\setminus H|-|I\cap H|.\]
Hence 
\[|I|=|I\cap H|+|I\cap(G\setminus H)|\leq|G\setminus H|.\]
On the other hand, $|I|\geq|G\setminus H|$, from which it follows that $|I|=|G\setminus H|$ and hence $\alpha(\G)=|G\setminus H|$. By \cite[Corollary 5.2.5]{rb-kr}, $\beta(\G)=|G|-\alpha(\G)=|H|$.

Since $G\setminus H$ is independent, it follows that $\alpha'(\G)\leq|H|$. On the other hand, if $c\in C\setminus H$, then $M=\{\{h,hc\}:h\in H\}$ is a matching with $|H|$ elements, which implies that $\alpha'(\G)\geq|H|$. Hence $\alpha'(\G)=|H|$ and consequently, by \cite[Theorem 5.3.3]{rb-kr}, $\beta'(\G)=|G|-\alpha'(\G)=|G\setminus H|$, as required.
\end{proof}
%--------------------------------------------------
\begin{remark}
In general $G\setminus H$ is not the only independent set with maximum cardinality for if $C\setminus H$ is a singleton, then $A\cup B$ is an independent set of maximum cardinality, where $A$ is any subset of $H$ and $B=G\setminus AC^*$.
\end{remark}

Now, we study the chromatic number and edge chromatic number of the relative Cayley graphs. Note that by a well-known theorem of Vizing $\chi'(\G)$ is equal to either $\Delta(\G)$ or $\Delta (\G)+1$. In the former, $\G$ is said to be of class one and in the latter $\G$ is said to be of class two.
%--------------------------------------------------
\begin{theorem}
$\G$ is a class one graph.
\end{theorem}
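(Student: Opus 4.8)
The plan is to produce an explicit proper edge-colouring of $\G$ that uses exactly $\Delta(\G)$ colours. First I would pin down $\Delta(\G)$. By Lemma \ref{degrees} every $x\in H$ has $\deg_\G(x)=|C|$, while every $x\in G\setminus H$ has $\deg_\G(x)=|H\cap xC|\le|xC|=|C|$; since $1\in H$ this gives $\Delta(\G)=|C|$. As $\chi'(\G)\ge\Delta(\G)$ always, it suffices to exhibit a proper $|C|$-edge-colouring. Note that every edge of $\G$ meets $H$, so $H$ is a vertex cover and, in a $|C|$-colouring, each $h\in H$ must see all $|C|$ colours exactly once. Write $r=|H\cap C|$ and $s=|C\setminus H|$, so $|C|=r+s$, and split $\G=A\sqcup B$ into the internal part $A=\G'=\Cay(H,H\cap C)$, which is $r$-regular, and the bipartite part $B$ between $H$ and $G\setminus H$ whose edges are $\{h,hc\}$ with $h\in H$, $c\in C\setminus H$. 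On the $H$-side every vertex has $B$-degree $s$; writing $c=h^{-1}g\in C$ for a neighbour $h=gc^{-1}\in H$ of $g\in G\setminus H$ shows that such neighbours correspond to $c\in C\cap Hg$, whence $\deg_B(g)=|C\cap Hg|\le s$ (as $Hg\cap H=\emptyset$). Thus $\Delta(B)=s$.

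Assume the main case $r\ge1$ and $s\ge1$. The construction has three steps. First, by Vizing's theorem colour $A$ properly with the $r+1$ colours $\{1,\dots,r+1\}$; since $A$ is $r$-regular, each $h\in H$ misses exactly one colour $m(h)\in\{1,\dots,r+1\}$. Second, using the standard fact that a bipartite graph admits a matching covering all of its maximum-degree vertices, choose a matching $M^*\subseteq B$ covering every vertex of $B$-degree $s$; in particular $M^*$ saturates $H$, so $M^*=\{e_h=\{h,g_h\}:h\in H\}$. Third, colour each $e_h$ with $m(h)$, and colour the remaining external graph $B-M^*$, whose maximum degree is now at most $s-1$, properly with the $s-1$ fresh colours $\{r+2,\dots,r+s\}$ via K\"onig's edge-colouring theorem.

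I would then verify properness and that only $\Delta(\G)=r+s$ colours occur. At $h\in H$ the internal edges use $\{1,\dots,r+1\}\setminus\{m(h)\}$, the edge $e_h$ uses $m(h)$, and the other external edges at $h$ use fresh colours; these are pairwise distinct and exhaust $\{1,\dots,r+s\}$. At $g\in G\setminus H$ only external edges occur: the $M^*$-edge, if present, carries a colour in $\{1,\dots,r+1\}$ whereas the rest carry fresh colours, so distinctness follows from $M^*$ being a matching together with the K\"onig colouring of $B-M^*$. Hence $\chi'(\G)=\Delta(\G)$ and $\G$ is class one. The degenerate cases are easy: if $r=0$ then $\G=B$ is bipartite and K\"onig's theorem finishes directly, and if $C=\emptyset$ there is nothing to colour.

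The main obstacle, and the only genuine subtlety, is matching the colour budget exactly. Vizing gives $A$ a colouring with $r+1$ colours, one more than the $r$ slots left after the $s$ external colours, so the extra colour must be absorbed without overshooting to $\Delta+1$. This is precisely the role of $M^*$: it reuses, at each $h$, the single missing internal colour $m(h)$ on one external edge, while the max-degree-covering property keeps $B-M^*$ inside the remaining $s-1$ fresh colours. I would also flag the boundary case $s=0$, i.e. $C\subseteq H$: then $B$ is empty and $\G$ is $\Cay(H,C)$ together with isolated vertices, which can be class two (for instance $\Cay(\Z_3,\{1,2\})=K_3$). Thus the argument genuinely uses the presence of external edges, the feature that distinguishes a relative Cayley graph from an ordinary one, and I would expect the statement to be intended under the hypothesis $C\not\subseteq H$.
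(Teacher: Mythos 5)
Your proof is correct (under the hypothesis $C\not\subseteq H$, which is genuinely needed), and its skeleton matches the paper's: color $\G'$ by Vizing with $|H\cap C|+1$ colors, absorb the single missing color at each $h\in H$ into one external edge chosen from a matching that saturates $H$, and color the remaining external edges with $|C\setminus H|-1$ further colors. The difference lies in how the bipartite part is handled. You invoke two general theorems: the lemma that a bipartite graph has a matching covering all its maximum-degree vertices (to get $M^*$), and K\"onig's edge-coloring theorem (to color $B-M^*$). The paper gets both for free from the group structure: fixing $c\in C\setminus H$, the edges $\{h,hc\}$ with $h\in H$ already form a matching saturating $H$, and every other external edge $\{h,hd\}$ with $d\in(C\setminus H)\setminus\{c\}$ is simply given the color $d$; properness at an external vertex $g$ is automatic because distinct neighbors $h$ of $g$ determine distinct labels $h^{-1}g$. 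So the paper's argument is self-contained and explicit (no matching theory, no K\"onig), at the price of being tied to the Cayley labeling, while yours is more robust: it works verbatim for any graph consisting of a regular graph on a vertex cover together with a bipartite graph attached to it, with no group structure on the external edges. Finally, your flag of the boundary case is a genuine catch: if $\emptyset\neq C\subseteq H$ then every external vertex is isolated, so $\G$ is $\Cay(H,C)$ plus isolated vertices, and for instance $G=\Z_6$, $H\cong\Z_3$, $C=H\setminus\{1\}$ yields $K_3$ plus isolated vertices, which is class two; the theorem as stated omits this hypothesis, and the paper's proof assumes it tacitly at the very first step by fixing $c\in C\setminus H$.
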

\begin{proof}
Let $c\in C\setminus H$ be a fixed element. We construct an edge coloring of $\G$ using $(C\setminus\{c\})\cup\{1\}$, from which the result follows. By Vizing's Theorem, $\chi'(\G')\leq|H\cap C|+1$. Hence, we may color the edges of $\G'$ using $(H\cap C)\cup\{1\}$. Now, let $\{h,hd\}$ be an edge of $\G$, which does not belong to $\G'$, where $h\in H$ and $d\in C\setminus H$. If the edges incident to $h$ in $\G'$ are colored with $H\cap C$, then color the edge $\{h,hd\}$ with $d$ if $d\neq c$ and with $1$ if $d=c$. Otherwise, there exists an edge incident to $h$ in $\G'$ colored with $1$. Then, there exists $e\in(H\cap C)\cup\{1\}$ such that the edges incident to $h$ in $\G'$ are colored with $((H\cap C)\cup\{1\})\setminus\{e\}$. Hence, we color $\{h,hd\}$ with $d$ if $d\neq c$ and with $e$ if $d=c$. It is easy to see that, the resulting edge coloring of $\G$ is proper, as required.
\end{proof}

It is easy to see that $\chi (\G')\leq\chi(\G)\leq\chi(\G')+1$. In the next example, we show that there is a group $G$ and a subgroup $H$ such that the chromatic number of their corresponding relative Cayley graph $\G$ equals $\chi(\G')$ or $\chi(\G')+1$, with respect to two different inversed closed subset $C$ of $G\setminus\{1\}$..
%--------------------------------------------------
\begin{example}
Let $G=\gen{a,b:a^5=b^2=1,a^b=a^{-1}}$ be the dihedral group of order $10$ and $H=\gen{a}$.
\begin{itemize}
\item[(i)]If $C=\{a^{\pm1},b\}$, then $\chi(\G)=\chi(\G')=3$. (See Figure 1)
\item[(ii)]If $C=\{a^{\pm1},b,ab,a^{-1}b\}$, then $\chi(\G)=4$ and $\chi(\G')=3$. (See Figure 2)
\end{itemize} 
In Figures 1 and 2, doubled edges illustrate $\G'$ and all the edges illustrate $\G$.
\begin{center}
\begin{tabular}{ccc}
\begin{tikzpicture}
\node [circle,fill=green,inner sep=2pt] (A) at ({cos(72*0+18)},{sin(72*0+18)}) {};
\node [circle,fill=blue,inner sep=2pt] (B) at ({cos(72*1+18)},{sin(72*1+18)}) {};
\node [circle,fill=red!80,inner sep=2pt] (C) at ({cos(72*2+18)},{sin(72*2+18)}) {};
\node [circle,fill=green,inner sep=2pt] (D) at ({cos(72*3+18)},{sin(72*3+18)}) {};
\node [circle,fill=red!80,inner sep=2pt] (E) at ({cos(72*4+18)},{sin(72*4+18)}) {};
\node [circle,fill=red!80,inner sep=2pt] (AA) at ({2*cos(72*0+18)},{2*sin(72*0+18)}) {};
\node [circle,fill=red!80,inner sep=2pt] (BB) at ({2*cos(72*1+18)},{2*sin(72*1+18)}) {};
\node [circle,fill=green,inner sep=2pt] (CC) at ({2*cos(72*2+18)},{2*sin(72*2+18)}) {};
\node [circle,fill=red!80,inner sep=2pt] (DD) at ({2*cos(72*3+18)},{2*sin(72*3+18)}) {};
\node [circle,fill=green,inner sep=2pt] (EE) at ({2*cos(72*4+18)},{2*sin(72*4+18)}) {};

\draw [color=gray] (A)--(AA);
\draw [color=gray] (B)--(BB);
\draw [color=gray] (C)--(CC);
\draw [color=gray] (D)--(DD);
\draw [color=gray] (E)--(EE);
\draw [thick, double, color=gray] (A)--(B)--(C)--(D)--(E)--(A);
\end{tikzpicture}
&\hspace{1cm}&
\begin{tikzpicture}
\node [circle,fill=yellow,inner sep=2pt] (A) at ({cos(72*0+18)},{sin(72*0+18)}) {};
\node [circle,fill=yellow,inner sep=2pt] (B) at ({cos(72*1+18)},{sin(72*1+18)}) {};
\node [circle,fill=yellow,inner sep=2pt] (C) at ({cos(72*2+18)},{sin(72*2+18)}) {};
\node [circle,fill=yellow,inner sep=2pt] (D) at ({cos(72*3+18)},{sin(72*3+18)}) {};
\node [circle,fill=yellow,inner sep=2pt] (E) at ({cos(72*4+18)},{sin(72*4+18)}) {};
\node [circle,fill=green,inner sep=2pt] (AA) at ({2*cos(72*0+18)},{2*sin(72*0+18)}) {};
\node [circle,fill=blue,inner sep=2pt] (BB) at ({2*cos(72*1+18)},{2*sin(72*1+18)}) {};
\node [circle,fill=red!80,inner sep=2pt] (CC) at ({2*cos(72*2+18)},{2*sin(72*2+18)}) {};
\node [circle,fill=green,inner sep=2pt] (DD) at ({2*cos(72*3+18)},{2*sin(72*3+18)}) {};
\node [circle,fill=red!80,inner sep=2pt] (EE) at ({2*cos(72*4+18)},{2*sin(72*4+18)}) {};

\draw [color=gray] (A)--(AA);
\draw [color=gray] (B)--(BB);
\draw [color=gray] (C)--(CC);
\draw [color=gray] (D)--(DD);
\draw [color=gray] (E)--(EE);
\draw [color=gray] (AA)--(B)--(CC)--(D)--(EE)--(A)--(BB)--(C)--(DD)--(E)--(AA);
\draw [thick, double, color=gray] (AA)--(BB)--(CC)--(DD)--(EE)--(AA);
\end{tikzpicture}\\
\mbox{Figure 1}&&\mbox{Figure 2}
\end{tabular}
\end{center}
\end{example}

The following theorem gives a sharp upper bound for the chromatic number of relative Cayley graphs.
%--------------------------------------------------
\begin{theorem}
$\chi(\G)\leq|H\cap C|+2$ and the equality follows if and only if 
\begin{itemize}
\item[(i)]$gH\cup H\setminus\{1\}\subseteq C$ for some $g\in G\setminus H$, or
\item[(ii)]$H=\gen{h}$ is cyclic and for each partition $H=X_1\cup X_2\cup X_3$ of $H$ such that $X_ih\cap X_i=\emptyset$ for $i=1,2,3$, there exists $g\in G\setminus H$ such that $C\cap gX_i\neq\emptyset$ for $i=1,2,3$.
\end{itemize}
\end{theorem}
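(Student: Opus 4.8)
The plan is to control $\chi(\G)$ by comparing it with the chromatic number of the induced Cayley graph $\G'=\Cay(H,H\cap C)$ and then to extract the equality cases from the two inequalities that enter the bound. As already noted in the text, $\chi(\G')\le\chi(\G)\le\chi(\G')+1$; the right-hand inequality holds because every edge of $\G$ meets $H$, so $G\setminus H$ is independent and one extra colour, used on all of $G\setminus H$, extends any colouring of $\G'$. Since $\G'$ is $|H\cap C|$-regular by Lemma \ref{degrees}(i), the greedy bound gives $\chi(\G')\le|H\cap C|+1$, and combining the two yields $\chi(\G)\le|H\cap C|+2$.

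For the equality I would first observe that $\chi(\G)=|H\cap C|+2$ forces \emph{both} inequalities to be tight: $\chi(\G')=|H\cap C|+1$ and $\chi(\G)=\chi(\G')+1$. Because $\G'$ is a regular graph attaining $\chi=\Delta+1$, Brooks' theorem (applied to each connected component, using that the components of a Cayley graph are pairwise isomorphic) pins $\G'$ down to a complete graph or an odd cycle. The second equality I would read through the extension mechanism above: writing $k=\chi(\G')$, the relation $\chi(\G)=\chi(\G')+1$ says that \emph{no} proper $k$-colouring of $\G'$ extends to $\G$. By Lemma \ref{degrees}(ii), $N_\G(v)=H\cap vC$ for $v\in G\setminus H$, so a $k$-colouring extends iff every such $v$ omits some colour on $H\cap vC$; thus non-extendability means that for each optimal colouring of $\G'$ there is a vertex $v\in G\setminus H$ whose neighbours already realise all $k$ colours, i.e.\ $v^{-1}X_i\cap C\ne\emptyset$ for each colour class $X_i$. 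Replacing $v$ by $g=v^{-1}$, which again lies in $G\setminus H$, rewrites this as $C\cap gX_i\ne\emptyset$.

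I would then split along the Brooks alternative. In the complete case, $\G'=K_{|H|}$ is equivalent to $H\setminus\{1\}\subseteq C$, and an optimal colouring is simply a bijection of $H$ onto $|H|$ colours; a blocking vertex $v$ is one with $H\cap vC=H$, i.e.\ $H\subseteq vC$, i.e.\ $gH\subseteq C$ for $g=v^{-1}$. Non-extendability is therefore equivalent to the existence of such a $g$, which together with $H\setminus\{1\}\subseteq C$ is exactly condition (i). In the odd cycle case, $\G'$ being an odd cycle is equivalent to $H=\gen{h}$ being cyclic with $H\cap C=\{h,h^{-1}\}$, whence $k=3$; the proper $3$-colourings of $\G'$ are precisely the partitions $H=X_1\cup X_2\cup X_3$ with $X_ih\cap X_i=\emptyset$, each class being an independent set of the cycle. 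The blocking condition then reads: for every such partition there is $g\in G\setminus H$ with $C\cap gX_i\ne\emptyset$ for $i=1,2,3$, which is exactly condition (ii). Each step is an equivalence, so reversing them yields both directions.

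The main obstacle I expect is the odd cycle case. In the complete case all optimal colourings agree up to a permutation of colours, so a single blocking vertex settles the matter; in the cyclic case one must instead quantify over the whole family of proper $3$-colourings of the cycle and match ``no colouring extends'' with the stated universal partition condition, all while keeping the $g\leftrightarrow g^{-1}$ bookkeeping straight. A secondary point requiring care is the Brooks step itself: one must invoke the vertex-transitivity of $\G'$ to discard disconnected possibilities and confirm that $\chi(\G')=|H\cap C|+1$ genuinely forces the connected complete graph or the connected odd cycle underlying (i) and (ii).
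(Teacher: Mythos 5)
Your proposal follows the paper's own route essentially step for step: the bound via $\chi(\G')\le\Delta(\G')+1$ plus one extra colour on the independent set $G\setminus H$; the observation that equality forces both $\chi(\G')=|H\cap C|+1$ and $\chi(\G)=\chi(\G')+1$; the appeal to Brooks' theorem; and the two-case analysis (complete graph versus odd cycle) with the $g\leftrightarrow g^{-1}$ bookkeeping. Your treatment of the extension mechanism is in fact more explicit than the paper's. The problem is the step you yourself flag as ``requiring care'': you assert that vertex-transitivity of $\G'$ lets Brooks' theorem force $\G'$ to be a \emph{connected} complete graph or a \emph{connected} odd cycle. That assertion is false, and it is the one point on which the whole characterization hinges. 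The components of $\G'=\Cay(H,H\cap C)$ are the left cosets of $K=\gen{H\cap C}$, and when $K$ is a proper subgroup of $H$ the graph $\G'$ can be a disjoint union of several copies of $K_{|H\cap C|+1}$ (namely when $H\cap C=K\setminus\{1\}$), or of several odd cycles; such graphs are vertex-transitive and satisfy $\chi=\Delta+1$, so neither Brooks' theorem nor transitivity excludes them.

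This gap cannot be repaired inside your argument, because in the disconnected case the conclusion itself fails. Take $G=\Z_2^3=\gen{a,b,t}$, $H=\gen{a,b}$ and $C=\{a,t,at\}$, so that $H\cap C=\{a\}$ and $K=\gen{a}<H$. Then $\G=\Cay(G,H,C)$ is the disjoint union of the two diamonds ($K_4$ minus an edge) induced on $\{1,a,t,at\}$ and $\{b,ab,bt,abt\}$; each contains a triangle, so $\chi(\G)=3=|H\cap C|+2$. Yet condition (i) fails (it would require $gH\subseteq C$ with $|gH|=4>|C|=3$, and moreover $H\setminus\{1\}\not\subseteq C$) and condition (ii) fails ($H\cong\Z_2\times\Z_2$ is not cyclic). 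In fairness, the paper's own proof makes exactly the same unjustified jump --- it writes ``by Brooks' theorem, either $\G'$ is complete or $\G'$ is an odd cycle'' with no attention to connectedness --- so you have faithfully reproduced the published argument, gap included; but your explicit claim that transitivity ``discards disconnected possibilities'' is wrong, and with it the proof (and indeed the stated equivalence itself) collapses whenever $\gen{H\cap C}\neq H$.
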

\begin{proof}
It is well-known from spectral graph theory that $\chi(\G')\leq\Delta(\G')+1=|H\cap C|+1$ (see \cite[Lemma 3.17 and Theorem 3.18]{rbb}). Now, since $G\setminus H$ is an independent set in $\G$, it follows that $\chi(\G)\leq|H\cap C|+2$. In the sequel assume that $\chi(\G)=|H\cap C|+2$. Then $\chi(\G')=|H\cap C|+1$ and by Brook's theorem (see \cite{rlb}), either $\G'$ is complete or $\G'$ is an odd cycle. If $\G'$ is complete, then $H\setminus\{1\}\subseteq C$. On the other hand, $\chi(\G)=\chi(\G')+1$, which implies that there exists a vertex $g\in G\setminus H$ adjacent to all elements of $H$. Hence $H\cap gC=N_\G(g)=H$ so that $g^{-1}H\subseteq C$ and part (i) follows. Finally, suppose that $\G'$ is an odd cycle. Then for every coloring of $H$ into three colors with color classes $X_1$, $X_2$ ad $X_2$ there must exists an element $g\in G\setminus H$ adjacent to some elements of $X_i$ for $i=1,2,3$, from which part (ii) follows. The converse is obvious.
\end{proof}
%=====================================================
\section{Forbidden subgraphs}
In this section, we shall study forbidden structures in relative Cayley graphs. Recall that a graph is \textit{$S$-free} if is has no induced subgraphs isomorphic to $S$. In what follows, the structure of claw-free graphs as well as $C_n$-free graphs will be considered, where a \textit{claw} is the star graph $K_{1,3}$. In addition, the structure of relative Cayley trees with be classified.
%--------------------------------------------------
\begin{theorem}
$\G$ is claw-free if and only if one of the following conditions hold:
\begin{itemize}
\item[(i)]$|C|\leq2$,
\item[(ii)]$C=\{a,b,ab^{-1},ba^{-1}\}$ such that $a,b\in G\setminus H$ belong to the same right coset of $H$,
\item[(iii)]$C\setminus H=\{c\}$ and $(H\cap C)^*\leq G$ for some involution $c\in G\setminus H$, or
\item[(iv)]$C\subseteq H$ and $1\notin(H\setminus C^*)^3$.
\end{itemize}
\end{theorem}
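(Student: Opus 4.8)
The plan is to classify claws by the location of their centre, using two facts available from Section~2: the set $G\setminus H$ is independent, and left translation by any $h\in H$ is an automorphism of $\G$ that fixes $H$ setwise and acts transitively on $H$. Hence a claw centred in $H$ may be assumed centred at $1$, so such a claw exists if and only if the subgraph induced on $N_\G(1)=C$ has three pairwise non-adjacent vertices; a claw centred at $v\in G\setminus H$ exists if and only if $N_\G(v)=H\cap vC$ contains an independent triple of $\G'$. I would first record two elementary observations. Since $v(H\cap C)\subseteq vH$ misses $H$, we have $\deg_\G(v)=|H\cap vC|\le|C\setminus H|$ for every $v\in G\setminus H$; and $C\setminus H\subseteq N_\G(1)=C$ is an independent set. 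The second forces $|C\setminus H|\le2$ whenever $\G$ is claw-free (three elements of $C\setminus H$ would be an independent triple in $N_\G(1)$), and the first then gives $\deg_\G(v)\le2$ outside $H$, so \emph{no} claw can be centred in $G\setminus H$. Thus, once $|C\setminus H|\le2$, $\G$ is claw-free if and only if $C$ carries no independent triple, and the whole problem splits according to $|C\setminus H|\in\{0,1,2\}$.

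The three finite cases I expect to be routine. If $|C|\le2$ then every vertex has degree at most $2$, so $\G$ is claw-free; this is (i). If $|C\setminus H|=1$, inverse-closedness of $C\setminus H$ forces its single element $c$ to be an involution, and a short computation shows $c$ is non-adjacent in $C$ to every element of $H\cap C$; hence an independent triple appears as soon as $H\cap C$ has a non-adjacent pair, so claw-freeness is equivalent to $\G'[H\cap C]$ being complete, i.e.\ to $(H\cap C)^*\leq G$, which is (iii) (the degenerate $|H\cap C|\le1$ lying under (i)). If $|C\setminus H|=2$, write $C\setminus H=\{a,b\}$ (an inverse-closed pair); the neighbours of $a$, resp.\ $b$, inside $H\cap C$ are $\{a^2,ab\}$, resp.\ $\{b^2,ba\}$, so the triples $\{a,b,h\}$ force $H\cap C\subseteq\{a^2,ab,ba,b^2\}\setminus\{1\}$, and feeding this back through inverse-closedness pins $C$ down to $\{a,b,ab^{-1},ba^{-1}\}$ with $ab^{-1}\in H$, which is (ii). The point making every such $C$ claw-free is that $a$ is always adjacent to one of $a^2,ab$ and $b$ to one of $b^2,ba$, so every triple drawn from these four vertices contains an edge.

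The main obstacle is the remaining case $C\subseteq H$, where $G\setminus H$ consists of isolated vertices and $\G$ is $\Cay(H,C)$ together with isolated points, so claw-freeness of $\G$ is exactly claw-freeness of the Cayley graph $\Cay(H,C)$. Here $\G$ is claw-free iff there are no $c_1,c_2,c_3\in C$ with every $c_i^{-1}c_j\in\bar C:=H\setminus C^*$. One direction is immediate: from such a triple the cyclic differences $c_1^{-1}c_2,\ c_2^{-1}c_3,\ c_3^{-1}c_1$ lie in $\bar C$ and multiply to $1$, so $1\in\bar C^3$, the negation of (iv). The hard part will be the converse, namely manufacturing an induced claw from a relation $d_1d_2d_3=1$ with $d_i\in\bar C$: setting $c_2=c_1d_1$ and $c_3=c_1d_1d_2$ reproduces the required differences, but nothing a priori guarantees $c_2,c_3\in C$, so realising the relation as the differences of a genuine triple \emph{inside} $C$ is the crux, and it is exactly where I expect the real work to lie (and where the exclusion of case (i), i.e.\ the hypothesis $|C|\ge3$, must enter). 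Modulo this realisation step the four cases assemble into the stated equivalence, after which the ``if'' direction is the routine check that each of (i)--(iv) produces no independent triple in any neighbourhood.
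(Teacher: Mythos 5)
Your reduction (left translations by $H$ move any claw centred in $H$ to one centred at $1$; independence of $C\setminus H$ forces $|C\setminus H|\leq2$; then $\deg_\G(v)\leq2$ off $H$, so no claw is centred outside $H$) and your handling of cases (i)--(iii) are correct and essentially identical to the paper's proof, which runs the same three cases $|C\setminus H|=2,1,0$, uses the same potential claw $\{1,a,b,c\}$ to pin $H\cap C$ down to $\{ab^{-1},ba^{-1}\}$, and the same completeness argument for $H\cap C$ when $C\setminus H=\{c\}$. The genuine problem is exactly the step you flagged as the crux in the case $C\subseteq H$: manufacturing a claw from a relation $d_1d_2d_3=1$ with $d_i\in H\setminus C^*$. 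That step is not merely hard; it is impossible, because the ``only if'' direction of (iv) is false. Take $H=\gen{x}\cong\Z_9$ inside any larger finite group $G$ and $C=\{x^{\pm1},x^{\pm2}\}$, so that $\G'=\Cay(H,C)$ is the square of a $9$-cycle and every vertex of $G\setminus H$ is isolated. The neighbourhood of $1$ is $\{x,x^2,x^{-1},x^{-2}\}$, whose induced edges are $\{x,x^2\}$, $\{x^{-1},x^{-2}\}$ and $\{x,x^{-1}\}$; every triple among these four vertices contains one of these edges, so no claw is centred at $1$, and by vertex-transitivity of $\G'$ (together with the isolation of $G\setminus H$) the graph $\G$ is claw-free. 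Yet $x^3\in H\setminus C^*$ and $(x^3)^3=1$, so $1\in(H\setminus C^*)^3$ and (iv) fails, while (i)--(iii) fail trivially.

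The obstruction is precisely the distinction you drew: $1\in(H\setminus C^*)^3$ is equivalent to the existence of an independent triple \emph{somewhere} in $\Cay(H,C)$ (equivalently, a triangle in its complement), since $d_1d_2d_3=1$ yields the independent triple $\{1,d_1,d_1d_2\}$; but a claw requires an independent triple inside a \emph{single} neighbourhood, which is strictly stronger to produce. In the example above, $\{1,x^3,x^6\}$ is independent but has no common neighbour. The paper's own proof of this case consists of the single assertion that claw-freeness of $\G'$ makes the complement $\G'^c$ triangle-free --- exactly the non sequitur you refused to commit --- so your instinct that real work was needed there was sound, and in fact the work cannot be done. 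A correct version of condition (iv) would be the untranslated statement itself: $C\subseteq H$ and no $c_1,c_2,c_3\in C$ have all pairwise ratios $c_i^{-1}c_j$ in $H\setminus C^*$ (i.e., $\Cay(H,C)$ is claw-free); this is strictly weaker than the paper's $1\notin(H\setminus C^*)^3$, and the two agree only under additional hypotheses not present here.
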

\begin{proof}
First suppose that $\G$ is claw-free. If $|C|\leq2$, then we are done. Hence, we assume that $|C|>2$. Since $h(C\setminus H)\subseteq N_\G(h)$ is independent for all $h\in H$, we must have $|C\setminus H|\leq2$. Then $H\cap C\neq\emptyset$ and we have the following three cases:

Case 1. $|C\setminus H|=2$. Let $C\setminus H=\{a,b\}$. If $c\in H\cap C$ such that $c\{a,b\}\cap\{a,b\}=\emptyset$, then $\{1,a,b,c\}$ induces a claw, which is a contradiction. Thus $c\{a,b\}\cap\{a,b\}\neq\emptyset$ for all $c\in H\cap C$, from which it follows that $H\cap C=\{ab^{-1},ba^{-1}\}$, as required. Clearly, $a,b$ belong to the same right cosets of $H$.

Case 2. $|C\setminus H|=1$. Let $C\setminus H=\{c\}$. Then $c^2=1$. If $a\in H\cap C$ is adjacent to $c$, then there exists $b\in C$ such that $ab=c$. But then $b\in C\setminus H=\{c\}$ and we obtain $a=1$, which is a contradiction. Hence $N_\G(c)\cap(H\cap C)=\emptyset$. Now, since $\G$ is claw-free and $N_\G(1)=C$, the subgraph of $\G$ induced by $H\cap C$ is complete, that is, $(H\cap C)^*\leq H$.

Case 3. $|C\setminus H|=0$. Then $C\subseteq H$. Since $\G'$ is claw-free it follows that $\G'^c$ is triangle free, from which it follows that $1\notin(H\setminus C^*)^3$.

The converse is straightforward.
\end{proof}
%--------------------------------------------------
\begin{lemma}\label{forest}
The following conditions are equivalent:
\begin{itemize}
\item[(i)]$\G$ is a forest,
\item[(ii)]$\G$ is both triangle-free and square-free, and $\Cay(H,H\cap(C\cup C^2\setminus\{1\}))$ is a forest.
\item[(iii)]$H\cap C^2=\{1\}$ and either $H\cap C=\emptyset$ or $H\cap C=\{c\}$ for some involution $c$.
\end{itemize}
\end{lemma}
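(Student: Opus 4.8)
The plan is to establish the cycle of implications (i) $\Rightarrow$ (iii) $\Rightarrow$ (ii) $\Rightarrow$ (i). Throughout I will use three structural facts about $\G$: that $G\setminus H$ is an independent set (adjacency requires an endpoint in $H$); that two adjacent vertices $h_1,h_2\in H$ satisfy $h_1^{-1}h_2\in H\cap C$; and that whenever $h_1,h_2\in H$ have a common neighbour $g\in G\setminus H$, then $h_1^{-1}h_2=(h_1^{-1}g)(g^{-1}h_2)\in H\cap(C\setminus H)^2\subseteq H\cap C^2$. The bookkeeping is organized by the decomposition $H\cap C^2=(H\cap C)^2\cup\big(H\cap(C\setminus H)^2\big)$, so that $H\cap C^2=\{1\}$ splits into two independent demands: $(H\cap C)^2\subseteq\{1\}$, which says exactly that $H\cap C$ is empty or a single involution, and $H\cap(C\setminus H)^2=\{1\}$, which by the common-neighbour identity is equivalent to every vertex of $G\setminus H$ having at most one neighbour.

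For (i) $\Rightarrow$ (iii) I argue contrapositively, exhibiting a cycle whenever (iii) fails. If $H\cap C$ contains an element $c$ of order at least $3$, then $1,c,c^2,\dots,c^{\mathrm{ord}(c)-1}$ is a cycle of $\G'\subseteq\G$; if $H\cap C$ contains two distinct involutions $c_1,c_2$, then $\gen{c_1,c_2}$ is dihedral and $\Cay(\gen{c_1,c_2},\{c_1,c_2\})$ is an even cycle sitting inside $\G$. Hence a forest forces $H\cap C$ to be empty or a single involution. Finally, if $H\cap(C\setminus H)^2\neq\{1\}$, choose distinct $d_1,d_2\in C\setminus H$ with $d_1d_2^{-1}\in H\setminus\{1\}$; on the bipartite vertex set $H\cup Hd_1$ every $h\in H$ is joined to the two distinct vertices $hd_1,hd_2$, and every vertex $hd_1\in Hd_1$ is joined to $h$ and to $hd_1d_2^{-1}\in H$. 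This subgraph has minimum degree $2$ and so contains a cycle, whence $\G$ is not a forest. Thus (i) implies (iii).

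Assuming (iii), the implication (iii) $\Rightarrow$ (ii) is almost a direct description of $\G$. Here $\G'=\Cay(H,H\cap C)$ is edgeless or a perfect matching, hence a forest; and since $H\cap C^2=\{1\}$ gives $H\cap\big((C\cup C^2)\setminus\{1\}\big)=H\cap C$, the graph $\Cay(H,H\cap(C\cup C^2\setminus\{1\}))$ coincides with $\G'$ and is a forest. Moreover every vertex of $G\setminus H$ has at most one neighbour, so $\G$ is obtained from the forest $\G'$ by attaching pendant and isolated vertices; none of these can lie on a cycle, so $\G$ itself is a forest and in particular triangle-free and square-free. This yields (ii).

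The substantial implication is (ii) $\Rightarrow$ (i), which I will prove by analysing a shortest cycle $Z$ of $\G$. Triangle-freeness and square-freeness force $|Z|\ge 5$. Since $G\setminus H$ is independent, every vertex of $Z$ outside $H$ has both of its $Z$-neighbours in $H$; a short case check (a cycle meeting $H$ in at most two vertices must have length $3$ or $4$) shows that $Z$ meets $H$ in $m\ge 3$ vertices $h_0,\dots,h_{m-1}$, in cyclic order. Consecutive $h_i,h_{i+1}$ are either adjacent in $\G$, giving $h_i^{-1}h_{i+1}\in H\cap C$, or are separated in $Z$ by one outside vertex, giving $h_i^{-1}h_{i+1}\in H\cap(C\setminus H)^2$; in either case, as $h_i\neq h_{i+1}$, we get $h_i^{-1}h_{i+1}\in H\cap(C\cup C^2\setminus\{1\})$. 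Hence $h_0,\dots,h_{m-1}$ are $m\ge 3$ distinct, cyclically adjacent vertices of $\Cay(H,H\cap(C\cup C^2\setminus\{1\}))$, i.e. a genuine cycle, contradicting the assumption that this graph is a forest. I expect the delicate points to be precisely this bookkeeping: confirming that a shortest cycle of length at least five must visit three or more vertices of $H$, and that the projected closed walk is an honest cycle rather than a collapsing or backtracking walk — both of which hinge on the vertices of $Z$, and therefore the $h_i$, being pairwise distinct.
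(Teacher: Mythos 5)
Your proposal is correct, but it is organized genuinely differently from the paper's proof: you establish the cycle (i)$\Rightarrow$(iii)$\Rightarrow$(ii)$\Rightarrow$(i), whereas the paper proves (i)$\Rightarrow$(ii)$\Rightarrow$(iii)$\Rightarrow$(i), so no single implication is shared between the two arguments. The paper's central device, in its (i)$\Rightarrow$(ii), is a graph transformation: delete the vertices $G\setminus H$ from $\G$ and add the edges $\{h,hc_1c_2\}$ for $c_1,c_2\in C\setminus H$ with $c_1c_2\in H$, producing a copy $\G^*$ of $\Cay(H,H\cap(C\cup C^2\setminus\{1\}))$, together with an asserted (but not detailed) one-to-one correspondence between cycles of $\G$ and cycles of $\G^*$; its (ii)$\Rightarrow$(iii) then finds cycles supported on $\gen{c}$ and on dihedral subgroups $\gen{a,b}$, and its (iii)$\Rightarrow$(i) observes that $\G$ is a union of stars and double stars. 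You prove only the one direction of that cycle correspondence which is actually needed --- a cycle of $\G$ projects to a cycle of the auxiliary Cayley graph --- and you do it rigorously (girth at least $5$, hence at least three $H$-vertices on the cycle, pairwise distinctness of the projected vertices), which is precisely where the paper is most hand-wavy; in exchange, your (i)$\Rightarrow$(iii) must construct cycles directly inside $\G$ rather than in the auxiliary graph, which you do via powers of $c$, the dihedral $2|c_1c_2|$-cycle, and the minimum-degree-$2$ subgraph on $H\cup Hd_1$, organized by the identity $H\cap C^2=(H\cap C)^2\cup\bigl(H\cap(C\setminus H)^2\bigr)$, a useful decomposition the paper never makes explicit. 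Your (iii)$\Rightarrow$(ii) and the paper's (iii)$\Rightarrow$(i) are essentially the same ``matching plus pendant/isolated vertices'' observation. In short, your route buys full rigor at the step the paper leaves as a claimed bijection, while the paper's transformation $\G^*$, if fleshed out, yields the stronger structural statement that cycles of $\G$ and of $\G^*$ correspond exactly. (Both arguments, for what it is worth, silently exclude the degenerate case $C=\emptyset$, where (i) and (ii) hold while $H\cap C^2=\emptyset\neq\{1\}$; this is a defect of the statement itself, not of your proof.)
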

\begin{proof}
(i)$\Rightarrow$(ii).  Clearly, $\G$ is triangle-free and square-free, which enables us to construct a graph $\G^*$ obtained from $\G$ by removing $G\setminus H$ from the vertices and adding edges of the form $\{h,hc_1c_2\}$ for all $h\in H$ and $c_1,c_2\in C\setminus H$ such that $c_1c_2\in H$. Then $\G^*\cong\Cay(H,H\cap(C\cup C^2\setminus\{1\}))$. Clearly, there is a one-to-one correspondence between cycles of $\G$ and cycles of $\G^*$. Hence $\G^*$ is a forest.

(ii)$\Rightarrow$(iii). Let $D=H\cap(C\cup C^2\setminus\{1\})$. If $D=\emptyset$, then the result follows. Thus, we may assume $D\neq\emptyset$. If $c\in D$, then $\G$ contains a cycle whose vertices are elements of $\gen{c}$ unless $c$ is an involution. Hence, $D$ contains only involutions. If $|D|\geq2$ and $a,b\in D$ are distinct involutions, then $\G$ has a cycle of length $2|ab|$ whose vertices are elements of $\gen{a,b}$, which is a contradiction. Thus $D=\{c\}$, where $c$ is an involution. If $H\cap C^2\neq\{1\}$, then $c\in H\cap C^2$ so that $c=c_1c_2$ for some $c_1,c_2\in C$. But then $\{1,c_1,c,cc_1\}$ induces a $4$-cycle in $\G$, which is a contradiction. Thus $H\cap C^2=\{1\}$ so that $H\cap C=\{c\}$.

(iii)$\Rightarrow$(i). A simple verification shows that $\G$ is a union of star or double-starts, that is, $\G$ is a forest.
\end{proof}
%--------------------------------------------------
\begin{corollary}
$\G$ is a tree if and only if $H=1$ and $C=G\setminus\{1\}$, or $H=\gen{a}\cong\Z_2$ and $C=Da$, where $G=DH$ and $D\cap H=\{1\}$.
\end{corollary}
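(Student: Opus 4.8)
The plan is to use that a tree is exactly a connected, acyclic graph and to let an edge count do the heavy lifting. Since a tree is in particular a forest, I could feed Lemma~\ref{forest} in directly; instead I will extract everything I need from a single edge count, which is shorter, and then add the two extra constraints coming from connectivity and from having \emph{no} cycles at all. Throughout I would use Lemma~\ref{edgesnumber}, $|E(\G)|=|H|(2|C|-|H\cap C|)/2$, together with the identity $|E(\G)|=|G|-1=|H|[G:H]-1$ valid for any tree.

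First I would equate these two expressions for $|E(\G)|$. Clearing denominators and rearranging gives
\[
|H|\bigl(2[G:H]-2|C|+|H\cap C|\bigr)=2 .
\]
Since the left factor $|H|$ and the bracket are positive integers whose product is $2$, this forces $|H|\in\{1,2\}$. If $|H|=1$ then $H=1$; the graph is then a star with centre $1$, and connectivity (Corollary~\ref{connectioncondition}, $G=HC^*=C^*$) yields $C=G\setminus\{1\}$, the first alternative. If $|H|=2$, write $H=\gen{a}\cong\Z_2$; since $2[G:H]-2|C|$ is even, the displayed equation forces $|H\cap C|=1$, so $H\cap C=\{a\}$, and back-substitution gives $|C|=[G:H]$.

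It then remains, in the case $|H|=2$, to identify $C$. Here I would invoke acyclicity directly: for $g\in G\setminus H$ the coset $Hg$ consists of the two points $g$ and $ag$, and if both lay in $C$ then $g$ would be joined to $1$ (as $g\in C$) and to $a$ (as $a^{-1}g=ag\in C$), which together with the edge $\{1,a\}$ produces a triangle. Hence each of the $[G:H]-1$ nontrivial cosets $Hg$ meets $C$ in at most one point; as $|C\setminus H|=[G:H]-1$, each meets it in \emph{exactly} one, so $C$ is a right transversal of $H$ (the coset $H$ contributing the point $a$). Now the standing hypothesis $C=C^{-1}$ does the final work: because $(Hg)^{-1}=g^{-1}H$, the set $C$ is simultaneously a \emph{left} transversal, whence $CH=G$. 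Setting $D=Ca$ I would check $Da=Ca^2=C$ and $DH=CaH=CH=G$, and note that the only $c\in C$ with $ca\in H$ is $c=a$ (giving $ca=1$), so $D\cap H=\{1\}$; this is precisely the second alternative.

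For the converse I would run these steps backwards: when $H=1,\ C=G\setminus\{1\}$ the graph is the star $K_{1,|G|-1}$, and when $H=\gen{a}\cong\Z_2$ with $C=Da$, the relation $daH=dH$ shows $C$ is a left transversal, so $C=C^{-1}$ makes it a right transversal and every $g\in G\setminus H$ has $\deg_\G(g)=|Hg\cap C|=1$; thus $\G$ is a double star, and both graphs are trees. I expect the only genuine obstacle to be the left/right coset bookkeeping of the third paragraph — in particular, recognising that inverse-closedness is exactly what upgrades the \emph{right} transversal $C$ to a two-sided transversal, so that $D=Ca$ really does satisfy $G=DH$.
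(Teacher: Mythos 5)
Your argument is correct in substance, but it takes a genuinely different route from the paper's. The paper states this as a corollary of Lemma \ref{forest} and gives no separate proof: a tree is a connected forest, so Lemma \ref{forest}(iii) gives $H\cap C^2=\{1\}$ and either $H\cap C=\emptyset$ or $H\cap C=\{c\}$ with $c$ an involution; since under these conditions $\G$ is a disjoint union of stars and double-stars (as observed in the proof of (iii)$\Rightarrow$(i) there), connectivity leaves exactly the two alternatives of the statement. You bypass Lemma \ref{forest} altogether: equating Lemma \ref{edgesnumber} with the tree identity $|E(\G)|=|G|-1$ yields $|H|\bigl(2[G:H]-2|C|+|H\cap C|\bigr)=2$, hence $|H|\in\{1,2\}$, and in the case $|H|=2$ parity forces $H\cap C=\{a\}$ and $|C|=[G:H]$; triangle-freeness plus pigeonhole then makes $C$ a right transversal, and inverse-closedness upgrades it to a left transversal, giving $D=Ca$. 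Your route buys self-containedness — the crucial bound $|H|\leq2$ falls out of arithmetic rather than out of the structural analysis of forests — while the paper's route buys economy, since Lemma \ref{forest} already classifies all forests and the corollary is a short specialization of it.

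One caveat, which is really a defect of the statement rather than of your proof, and which you brushed against in your final paragraph: your converse step ``the relation $daH=dH$ shows $C$ is a left transversal'' is valid only if $G=DH$, $D\cap H=\{1\}$ is read as an exact factorization, i.e. $|D|=[G:H]$ with every element of $G$ written uniquely as $dh$. Under the weaker literal reading the corollary itself is false: in $G=\Z_4$ (written additively) with $H=\{0,2\}$ and $D=\{0,1,3\}$, the set $C=D+2=\{1,2,3\}$ is inverse-closed, avoids the identity, and satisfies $G=D+H$ and $D\cap H=\{0\}$, yet $\G$ contains the triangle $\{0,1,2\}$, so it is not a tree. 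Thus your converse silently adopts the intended (transversal) reading; note that your own forward direction produces exactly this stronger conclusion, since there $|D|=|C|=[G:H]$.
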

%--------------------------------------------------
\begin{lemma}\label{triangle-free}
$\G$ is triangle-free if and only if $H\cap C\cap C^2=\emptyset$.
\end{lemma}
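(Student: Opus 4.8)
The plan is to prove both implications directly, exploiting the fact that left translation by any element of $H$ is a graph automorphism of $\G$: if $u\sim v$ then one of $u,v$ lies in $H$ and $u^{-1}v\in C$, and for $h\in H$ the pair $hu,hv$ still satisfies $(hu)^{-1}(hv)=u^{-1}v\in C$ with one of $hu,hv$ lying in $H$. This lets me normalize any triangle so that one of its vertices is the identity, collapsing what would otherwise be a two-versus-three-vertices case analysis into a single computation.

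For the forward direction, I would first observe that every edge of $\G$ has at least one endpoint in $H$, so a triangle $\{x,y,z\}$ must have at least two vertices in $H$: if only one vertex lay in $H$, the edge between the remaining two would have no endpoint in $H$, and if none lay in $H$ there would be no edges at all. Picking two such vertices, say $x,y\in H$, and translating by $x^{-1}$, I obtain a triangle $\{1,a,b\}$ with $a=x^{-1}y\in H$ and $b=x^{-1}z$. The three adjacencies $1\sim a$, $1\sim b$, $a\sim b$ then read $a\in H\cap C$, $b\in C$, and $a^{-1}b\in C$. Since $C$ is inverse closed, $b^{-1}a\in C$, whence $a=b(b^{-1}a)\in C^2$. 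Combining, $a\in H\cap C\cap C^2$, so this set is nonempty.

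For the converse, given $d\in H\cap C\cap C^2$ I would write $d=c_1c_2$ with $c_1,c_2\in C$ and exhibit the explicit triangle on $\{1,c_1,d\}$. Here $1,d\in H$, so the edges $1\sim c_1$ (as $c_1\in C$), $1\sim d$ (as $d\in C$), and $c_1\sim d$ (as $d\in H$ and $c_1^{-1}d=c_2\in C$) are all present; the three vertices are distinct because $c_1,d\in C$ are nontrivial and $c_1=d$ would force $c_2=1\notin C$. Hence $\G$ contains a triangle.

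I do not anticipate a genuine obstacle here; the only point requiring care is the counting argument forcing two triangle-vertices into $H$, together with tracking which coset representatives land in $H\cap C$ versus $C\setminus H$ after translating. The automorphism reduction is precisely what makes both the forward case distinction and the converse construction uniform, so the argument reduces to elementary manipulation inside $G$.
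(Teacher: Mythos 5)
Your proof is correct and takes essentially the same route as the paper: the paper also normalizes a triangle to one of the form $\{1,c_1,c_1c_2\}$ with $c_1\in H\cap C$, $c_2\in C$, and reads off $c_1\in H\cap C\cap C^2$ from the adjacency of $1$ and $c_1c_2$, then exhibits a triangle for the converse. You simply make explicit the steps the paper leaves to the reader (that a triangle has at least two vertices in $H$, that left translation by $H$ is an automorphism justifying the normalization, and the distinctness check in the converse), which is a welcome but not substantively different elaboration.
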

\begin{proof}
If $\G$ has a triangle, then it contains a triangle of the form $\{1,c_1,c_1c_2\}$, where $c_1\in H\cap C$ and $c_2\in C$. Since $1$ and $c_1c_2$ are adjacent, there exists $c\in C$ such that $c_1c_2=1\cdot c=c$. Hence, $c_1=cc_2^{-1}\in H\cap C\cap C^2$ so that $H\cap C\cap C^2\neq\emptyset$. Conversely, if $H\cap C\cap C^2\neq\emptyset$, then we may simply find a triangle. Hence the result follows.
\end{proof}
%--------------------------------------------------
\begin{lemma}\label{square-free}
$\G$ has no squares as subgraph if and only if the following conditions hold:
\begin{itemize}
\item[(1)]$\G'$ has no squares as subgraph,
\item[(2)]$(H\cap C)^2\cap(C\setminus H)^2=\{1\}$,
\item[(3)]$\sum_{c\in C\setminus H}\deg_\G(c)=|H\cap(C\setminus H)^2|+|C\setminus H|$.
\end{itemize}
\end{lemma}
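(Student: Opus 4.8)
The plan is to classify the $4$-cycles of $\G$ according to how many of their four vertices lie outside $H$. Since $G\setminus H$ is an independent set, the outside-$H$ vertices of a $4$-cycle are pairwise non-adjacent and hence form a subset of one of the two diagonals; in particular a $4$-cycle has at most two vertices outside $H$. This splits the analysis into three cases: all four vertices in $H$, exactly one vertex outside $H$, and exactly two (diagonally opposite) vertices outside $H$. Throughout I would use that left multiplication by any $h\in H$ is an automorphism of $\G$ (it fixes $H$ setwise and preserves the differences $x^{-1}y$), so every $4$-cycle may be translated until one prescribed $H$-vertex becomes the identity.

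First I would dispose of the case in which all four vertices lie in $H$: there the relevant edges are exactly the edges of $\G'=\Cay(H,H\cap C)$, so such a square exists precisely when $\G'$ contains a square, which is condition~(1). Next, for a square with a single vertex $g\in G\setminus H$ and the other three in $H$, I normalise a neighbour of $g$ to be $1$ and read off, using Lemma~\ref{degrees}, that the $H$-vertex $v$ opposite to $g$ satisfies $v\in(H\cap C)^2$ (travelling through $H$) and simultaneously $v\in(C\setminus H)^2$ (travelling through $g$), with $v\neq1$. Conversely, any nontrivial $v\in(H\cap C)^2\cap(C\setminus H)^2$ factors in both ways and rebuilds such a cycle, the four vertices being automatically distinct because the factors lie in $C$ and so differ from $1$. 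Hence squares with exactly one outside vertex are absent if and only if $(H\cap C)^2\cap(C\setminus H)^2=\{1\}$, which is condition~(2).

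The remaining, and hardest, case is that of a square with two diagonally opposite vertices $v_1,v_3\in G\setminus H$ and the other diagonal $v_2,v_4\in H$; equivalently, two elements of $H$ sharing two common neighbours in $G\setminus H$. Normalising $v_2=1$, such a square amounts to an element $x=v_4\in H\setminus\{1\}$ admitting at least two common outside-$H$ neighbours with $1$. Here the tool is a double count: by Lemma~\ref{degrees}, $\sum_{c\in C\setminus H}\deg_\G(c)$ equals the number of length-two walks $1\sim c\sim h$ with $c\in C\setminus H$ and $h\in H$, and grouping these walks by their endpoint $h$ records, for each $x\in H\cap(C\setminus H)^2$, precisely the number of common outside neighbours of $1$ and $x$. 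After separating the degenerate endpoint $h=1$ (the inverse pairs, contributing $|C\setminus H|$ return walks) from the genuine endpoints, no two-outside square occurs if and only if every nonidentity element of $H\cap(C\setminus H)^2$ is reached by exactly one such walk, which is exactly the equality asserted in condition~(3). Combining the three cases, $\G$ is square-free iff (1), (2) and (3) hold simultaneously.

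The main obstacle I anticipate is precisely this last double count. One must pass correctly between walks and their endpoints, account for the degenerate return walks through the identity, and verify that ``at most one common outside neighbour for each target'' is exactly the \emph{equality} case of the natural inequality comparing the walk count $\sum_{c\in C\setminus H}\deg_\G(c)$ with the number of reachable targets (each contributing at least one walk) together with the identity contribution. This bookkeeping of the identity term is delicate and is where the precise constant in condition~(3) must be pinned down; alongside it one must check in each case that the four vertices produced are genuinely distinct, so that one really obtains an honest square. With these points settled, the forward and backward implications in all three cases assemble into the stated equivalence.
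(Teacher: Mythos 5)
Your decomposition is the same as the paper's (its Cases 1--3 are exactly your zero-, one-, and two-outside-vertex cases), and your handling of the first two cases is correct. The gap is at precisely the point you flag as delicate, and as written it is fatal: your (correct) criterion ``no two-outside square occurs iff every nonidentity element of $H\cap(C\setminus H)^2$ is reached by exactly one walk'' is \emph{not} the equality in condition (3). Whenever $C\setminus H\neq\emptyset$, the identity itself lies in $H\cap(C\setminus H)^2$ (take $cc^{-1}$), so it is counted once on the right-hand side of (3), while on the left-hand side it accounts for all $|C\setminus H|$ return walks. Your criterion therefore translates into
\[\sum_{c\in C\setminus H}\deg_\G(c)=|C\setminus H|+\bigl(|H\cap(C\setminus H)^2|-1\bigr),\]
which is one less than the right-hand side of (3). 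A correct completion of your argument proves the lemma with this corrected constant, not the lemma as stated; the assertion that your count ``is exactly the equality asserted in condition (3)'' is the one false step.

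In fact your double count, carried out honestly, refutes the printed statement, so no bookkeeping could have rescued it. Take $G=\Z_6$, $H=\{0,3\}$, $C=\{1,5\}$: then $\G$ is a disjoint union of two paths, hence square-free, but $\sum_{c\in C\setminus H}\deg_\G(c)=2$ while $|H\cap(C\setminus H)^2|+|C\setminus H|=1+2=3$, so (3) fails. Conversely, for $G=\Z_4$, $H=\{0,2\}$, $C=\{1,3\}$, the graph $\G$ is itself a $4$-cycle, yet (1), (2) (read, as both your proof and the paper's do, as ``no nontrivial common element'') and (3) (here $4=2+2$) all hold. The paper's own proof stumbles at the same place: in its Case 3 it claims that a square forces $|X|>|H\cap(C\setminus H)^2|+|C\setminus H|$, where $X=\{(c_1,c_2)\in(C\setminus H)\times(C\setminus H):c_1c_2\in H\}$, but a single square only forces $|X|\geq|H\cap(C\setminus H)^2|+|C\setminus H|$, and the converse direction is never argued at all. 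So your approach is sound---indeed cleaner than the paper's---but its correct conclusion is the lemma with $|H\cap(C\setminus H)^2|+|C\setminus H|-1$ (for $C\setminus H\neq\emptyset$) in place of the stated right-hand side of (3).
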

\begin{proof}
Suppose $\G$ has an square $S$. Clearly, $|S\cap H|\geq2$ and we may assume without loss of generality that $1\in S$. We have three cases:

Case 1. $|S\cap H|=4$. Then $\G'$ has the square $S$ as a subgraph.

Case 2. $|S\cap H|=3$. Then $S\cap H=\{1,h,k\}$ and $S\setminus H=\{x\}$ for some $h,k\in H\cap C$ and $x\in G\setminus H$, and we may assume that $x$ is adjacent to both $h$ and $k$. So, there exists $c_h,c_k\in C\setminus H$ such that $x=hc_h=kc_k$. Hence $1\neq h^{-1}k=c_hc_k^{-1}\in(H\cap C)^2\cap(C\setminus H)^2$.

Case 3. $|S\cap H|=2$. Then $S\cap H=\{1,h\}$ and $S\setminus H=\{c_1,c_2\}$ for some $h\in H$ and $c_1,c_2\in C\setminus H$. Let 
\[X=\{(c_1,c_2)\in (C\setminus H)\times(C\setminus H):c_1c_2\in H\}.\]
Since $h$ is adjacent to $c_1,c_2$, there exist $c'_1,c'_2\in C\setminus H$ such that $hc'_1=c_1$ and $hc'_2=c_2$. Hence $c_1c_1'^{-1}=c_2c_2'^{-1}=h$, which implies that $|X|>|H\cap(G\setminus H)^2|+|C\setminus H|$.

Utilizing the cases 1, 2 and 3, the result follows immediately.
\end{proof}

Even thought the structure of $\{C_n\}_{n\geq3}$-free graphs (forests) can be determined simply (see Theorem \ref{forest}), but the structure of $\{C_{2n+1}\}_{n\geq1}$-free graphs (bipartite graphs) seems to be a difficult problem. The following result provides a partial answer to this problem.
%--------------------------------------------------
\begin{lemma}\label{bipartite}
If $H\cap C=\emptyset$, then $\G$ is a bipartite graph.
\end{lemma}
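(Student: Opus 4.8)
The plan is to show that the partition $G = H \cup (G\setminus H)$ is a bipartition of $\G$, i.e.\ that both parts are independent sets. A graph is bipartite precisely when its vertex set admits such a decomposition into two independent classes, so exhibiting this natural candidate partition is the whole strategy.

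That $G\setminus H$ is an independent set is immediate from the adjacency rule itself: an edge $\{x,y\}$ requires at least one of $x,y$ to lie in $H$, so two vertices of $G\setminus H$ are never adjacent. (This fact is used repeatedly elsewhere in the paper, e.g.\ in the computation of $\alpha(\G)$.) It remains to verify that $H$ is independent under the hypothesis $H\cap C=\emptyset$. Here the subgraph induced on $H$ is exactly $\G'=\Cay(H,H\cap C)$, whose connection set is empty; equivalently, if $x,y\in H$ were adjacent then $x^{-1}y\in C$, while $x^{-1}y\in H$ since $H$ is a subgroup, forcing $x^{-1}y\in H\cap C=\emptyset$, a contradiction. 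Hence $H$ carries no edges either.

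Since both $H$ and $G\setminus H$ are independent, $(H,G\setminus H)$ is a valid $2$-colouring and $\G$ is bipartite. I do not expect any genuine obstacle: the result is a direct structural consequence of $H\cap C=\emptyset$, which simultaneously empties the induced Cayley subgraph $\G'$ on $H$, while the defining adjacency already excludes edges within $G\setminus H$. One could alternatively read the same conclusion off Lemma \ref{degrees}(i): for $x\in H$ we have $N_\G(x)=xC$, and when $H\cap C=\emptyset$ no element $xc$ with $c\in C$ can lie in $H$ (otherwise $c\in H$), so every neighbor of a vertex of $H$ falls in $G\setminus H$, giving the bipartition directly.
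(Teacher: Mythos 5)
Your proof is correct and follows the paper's own argument exactly: $G\setminus H$ is independent by the definition of adjacency, and $H$ is independent because an edge inside $H$ would force $x^{-1}y\in H\cap C=\emptyset$, giving the bipartition $(H,\,G\setminus H)$. The alternative reading via Lemma \ref{degrees}(i) is a nice equivalent phrasing but not a different method.
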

\begin{proof}
By definition $G\setminus H$ is an independent set. Now, if $x,y\in H$ are adjacent, then $x^{-1}y\in H\cap C$, which is a contradiction. Hence, $H$ is an independent set and consequently $\G$ is bipartite.
\end{proof}
%--------------------------------------------------
\begin{example}
Let $G=\gen{a,b:a^4=b^2=1,a^b=a^{-1}}$ be the dihedral group of order $8$. If $H=\gen{a^2,b}$ and $C=\{a,a^{-1},b\}$, then $H\cap C\neq\emptyset$ but $\G$ is bipartite as illustrated in Figure 3.
\end{example}
\begin{center}
\begin{tikzpicture}[scale=0.75]
\node [circle,fill=black,inner sep=2pt,label=left:\tiny{$a^{-1}b$}] (A) at (0,0) {};
\node [circle,fill=black,inner sep=2pt,label=left:\tiny{$ab$}] (B) at (0,1) {};
\node [circle,fill=black,inner sep=2pt,label=left:\tiny{$a^2$}] (C) at (0,2) {};
\node [circle,fill=black,inner sep=2pt,label=left:\tiny{$1$}] (D) at (0,3) {};
\node [circle,fill=black,inner sep=2pt,label=right:\tiny{$a^2b$}] (E) at (2,0) {};
\node [circle,fill=black,inner sep=2pt,label=right:\tiny{$b$}] (F) at (2,1) {};
\node [circle,fill=black,inner sep=2pt,label=right:\tiny{$a^{-1}$}] (G) at (2,2) {};
\node [circle,fill=black,inner sep=2pt,label=right:\tiny{$a$}] (H) at (2,3) {};

\draw [color=gray] (A)--(E)--(B)--(F)--(A);
\draw [color=gray] (C)--(G)--(D)--(H)--(C);
\draw [color=gray] (C)--(E);
\draw [color=gray] (D)--(F);
\end{tikzpicture}\\
Figure 3.
\end{center}
%--------------------------------------------------
\begin{problem}
Is there any simple characterization of bipartite relative Cayley graphs $\Cay(G,H,C)$ in terms of $G$, $H$ and $C$?
\end{problem}
%=====================================================

\end{document}